\newcommand{\bbE}{{\bf E}}
\newcommand{\bbT}{{\bf T}}
\newcommand{\bbU}{{\bf U}}
\newcommand{\bbV}{{\bf V}}
\newcommand{\um}{{\underline m}}
\newcommand{\ug}{{\underline g}}
\newcommand{\uF}{{\underline F}}
\newcommand{\rank}{{\text{rank}}}
\newcommand{\rVar}{{\text{Var}}}
\newcommand{\ep}{\varepsilon}
\newcommand{\rtr}{\mathrm{tr}}
\newcommand{\gs}{\sigma}
\newcommand{\bqn}{\begin{eqnarray*}}
	\newcommand{\eqn}{\end{eqnarray*}}
\newcommand{\bqa}{\begin{eqnarray}}
\newcommand{\eqa}{\end{eqnarray}}
\newtheorem{thm}{Theorem}[section]
\newtheorem{lemma}[thm]{Lemma}
\theoremstyle{thmstyleone}%
\theoremstyle{thmstyleone}%
\newtheorem{theorem}{Theorem}%  meant for continuous numbers
\newtheorem{proposition}[theorem]{Proposition}% 
\theoremstyle{thmstyletwo}%
\newtheorem{remark}{Remark}%
\theoremstyle{thmstylethree}%
\begin{document}

\title[Analysis of the limiting spectral distribution]{Analysis of the limiting spectral distribution of large dimensional general information-plus-noise type matrices}

\author{ \sur{Huanchao Zhou}}%\email{zhouhc782@nenu.edu.cn}

\author*{\sur{Jiang Hu}*}\email{huj156@nenu.edu.cn}
%\equalcont{These authors contributed equally to this work.}

\author{ \sur{Zhidong Bai}}%\email{baizd@nenu.edu.cn}

\author{ \sur{Jack W. Silverstein}}

\affil[1]{\orgdiv{School of Mathematics and Statistics}, \orgname{Northeast Normal University}, 
	%\orgaddress{\street{5268 Renmin Street}, \city{Changchun}, \postcode{130024}, \state{Jilin Province}, 
	\country{Changchun, China}}

\affil*[2]{\orgdiv{School of Mathematics and Statistics}, \orgname{Northeast Normal University}, 
	%\orgaddress{\street{5268 Renmin Street}, \city{Changchun}, \postcode{130024}, \state{Jilin Province}, 
	\country{Changchun, China}}
\affil[3]{\orgdiv{School of Mathematics and Statistics}, \orgname{Northeast Normal University}, 
	%\orgaddress{\street{5268 Renmin Street}, \city{Changchun}, \postcode{130024}, \state{Jilin Province}, 
	\country{Changchun, China}}

\affil[4]{\orgdiv{Department of Mathematics}, \orgname{North Carolina State University}, %\orgaddress{\street{Street}, \city{City}, \postcode{100190}, \state{State}, 
	\country{USA}}

\abstract{In this paper, we derive the analytical behavior of the limiting spectral distribution of non-central covariance matrices of the ``general
	information-plus-noise" type, as studied in \cite{zhou2022limiting}. Through the equation defining its Stieltjes transform, it is shown 
	that the limiting distribution has a continuous derivative away from zero, the derivative being analytic wherever it is positive, and we show 
	the determination criterion for its support.  We also extend the result in \cite{zhou2022limiting} to allow for all possible ratios of row to 
	column of the underlying random matrix.	}

\keywords{Random matrix, LSD, Stieltjes transform, Information-plus-noise matrix.}

%%\pacs[JEL Classification]{D8, H51}

%%\pacs[MSC Classification]{35A01, 65L10, 65L12, 65L20, 65L70}

\maketitle

\section{Introduction}\label{sec1}

The key point considered in this paper is to characterize the limiting spectral distribution (LSD) of non-central covariance matrices of the form:
\begin{equation}
B_n=\frac1N(R_n+T^{1/2}X_n)(R_n+T^{1/2}X_n)^*,\label{bn}
\end{equation}
where $ ^* $ stands for the complex conjugate transpose, $X_n$ is an $n\times N$ random matrix with independent standardized entries, $T_n^{1/2}$,an $n\times n$ matrix, is the nonnegative definite square root of the $n\times n$ nonrandom, nonnegative definite matrix $T_n$, $R_n$, an $n\times N$  nonrandom matrix, and $n,N$ are such that, as  $\min(n,N)\to\infty$, $c_n\equiv n/N\to c\in(0,1]$.

Let us commence with some background information.  
It is well known that the LSD for random matrices has constituted a basic part of large dimensional random matrices theory (LDRMT).
Since the famous semicircular law and M-P law were established in \cite{wigner1958distribution} and \cite{marchenko1967eigenvalue} respectively, many researchers have contributed to its subsequent development. 
One of the most extensively investigated in LDRMT is the so-called sample covariance type matrix taking the form $ M_{n} =\frac{1}{N} T^{\frac{1}{2}}_{n} X_{n} X^{*}_{n}T^{\frac{1}{2}}_{n} $, where initially the entries of $ X_{n} $ are independent and identically distributed(i.i.d.).   
Results on the LSD of the sample covariance type matrix can be found in \cite{silverstein1995strong,silverstein1995analysis,yin1986limiting}.

The majority of known results for the sample covariance type matrix are under the central condition, that is, the entries of $ X_{n} $ are zero mean (which can be extended to allowing the entries to have a common mean).  
In fact, the large non-central random matrices also have significant implications that may be considered as an extension of non-central Wishart matrices, an important random matrix in multivariate linear regression under a non-null hypothesis. 
Owing to its applications in wireless communication and signal processing, it is also referred to as an information-plus-noise matrix.
The first work on this direction was done by Dozier and Silverstein \cite{dozier2007empirical}. The model they considered is $$ D_n =\frac{1}{N}(R_n + \sigma X_n)(R_n + \sigma X_n)^*, $$
the matrix $ R_n $ contains the information,  and the matrix $ \sigma X_n $
is considered as additive noise. 
They studied the LSD of $ D_n $ and further investigated the analytical properties of the LSD, as well as the characterization of its support set in \cite{dozier2007analysis}.
Loubaton and Vallet \cite{loubaton2011almost} considered the model $ D_{n} $.
They assume $ R_n $ is of fixed rank and $ X_n $ with means zero and covariances $ \frac{1}{N} $, and they characterized the a.s. limits of the few largest eigenvalues of $ D_n $.
Bai and Silverstein \cite{bai2012no} showed that, with additional assumptions on $R_n$, in any closed interval outside the support of the LSD of $ D_n $, with probability one, there are no eigenvalues of $ D_n $ falling in this interval for all large $ n $.  Capitaine \cite{capitaine2014exact} further proved the exact separation problem.

Observing that the noise $ \sigma X_n $ has a covariance matrix $\sigma^2I$ ($ I $ representing the $ n \times n $ identity matrix),  Zhou et al. \cite{zhou2022limiting} expanded the possibilities of the noise covariance matrix. They allowed the covariance matrix not only to be a multiple of identity, but also other non-negative definite matrices. This further extended Dozier and Silverstein's model $ D_n $ to information-plus-correlated-noise matrices 
$$C_n =\frac{1}{N}T_{n}^{\frac{1}{2}} \left(R_{n}+X_{n} \right) \left(R_{n} +X_{n} \right)^{*}T_{n}^{\frac{1} {2}},$$ and Zhou et al. \cite{zhou2022limiting} proved that, almost surely, the empirical spectral distribution (ESD), $F_n$, of the matrix $ C_n $, defined by $F_n(x)=(1/n)( \text{number of eigenvalues of} \ C_n \leq x)$ converges to a nonrandom LSD $ F $, under certain conditions. When $T_{n}$ is a central sample covariance matrix and independent of $X_{n}$, $C_n $ is referred to as the noncentral Fisher matrix \cite{ZhangB23,HouZ23}.
We refer to the canonical correlation analysis \cite{BaoH1,BaoH2} for an application of the high dimensional noncentral Fisher matrix.

We are now able to state the result in Zhou et al. \cite{zhou2022limiting}. 
Under the assumptions:
\begin{description}
	\item[(a)] $ c_n = n/N \to c \in (0,1] $, as $ \min\{n, N\} \to \infty $.
	\item[(b)] The entries of $ X_n $ are independent standardized random variables satisfying
	$$  \frac{1}{\eta^2 nN} \sum_{jk} \bbE(\vert x^{(n)}_{jk} \rvert^2I(\vert x^{(n)}_{jk} \vert  \geq \eta \sqrt{n})) \to 0. $$ 
	\item[(c)] $ T_n $ is $n\times n$ nonrandom nonnegative definite, $R_n$ is $n\times N$ nonrandom, with $T_n$ and  $ R_nR^*_n$ commutative.
	\item[(d)] As $ \min\{n, N\} \to \infty $, the two-dimensional distribution function $ H_n(s,t)$, 
	where $ H_n(s, t) = n^{-1} \sum_{i=1}^{n} I(s_i \le s, t_i \le t) $ converges weakly to a nonrandom probability distribution $ H(s, t) $ almost surely, where $ s_i $, $ t_i $ are the paired eigenvalues of  $\frac{1}{N}R_nR_n^*$ and $T_n$, respectively.
\end{description}
It is shown that, with probability one, for any $z\in\mathbb{C}^+$, any limit of a subsequence of the Stieltjes transform $m_{C_n}(z)$ must converge to a number $m\in\mathbb{C}^+$ satisfying 
the equations
\begin{equation}
	\left\{
	\begin{aligned}
		m=\int\frac{\mathrm{d}H(s,t)}{\frac{st}{1+cg}-(1+cmt)z+t(1-c)}, \\
		g=\int\frac{t\mathrm{d}H(s,t)}{\frac{st}{1+cg}-(1+cmt)z+t(1-c)}, \label{first}
	\end{aligned}
	\right.
\end{equation}	
where $g\in\mathbb{C}^+$.   We remark here that $g$ is the limit (when it exists) of $g_n=(1/n)\mathrm{tr}(C_n-zI)^{-1}T_n$.

Moreover, it is shown that for $c\leq 1$, there is only one pair $(m,g)$, each in $\mathbb{C}^+$ satisfying \eqref{first} for all $z\in\mathbb{C}^+$, 
which implies, with probability one, $F_n$ converges vaguely to a distribution $F$ having Stieltjes transform $m=m(z)$ satisfying  \eqref{first} for all
$z\in\mathbb{C}^+$.
We will presently prove that the convergence is actually weak, that is, $F$ is a probability distribution function, a fact omitted in Zhou et al. \cite{zhou2022limiting}.

Before continuing, it is necessary to introduce the major tool used in establishing almost sure weak convergence of ESDs.   
It is the Stieltjes transform of the empirical distribution of eigenvalues of a matrix.  
For any finite measure $\mu$ on $\mathbb{R}$ with distribution function $F$, the Stieltjes transform of $\mu$ is defined as $ z\in \mathbb{C}^{+}\equiv\{z\in \mathbb{C}: \Im z>0\}, $
\begin{equation}
\begin{split}
m_{\mu}(z)=m_{F}(z)= \int \dfrac{1}{\lambda-z} \mathrm{d}\mu(\lambda)= \int \dfrac{1}{\lambda-z} \mathrm{d}F(\lambda).\label{steiltdef}
\end{split}
\end{equation}
An important property is the inversion formula: $ F $ can be obtained by 
\begin{align*}
F(b)-F(a)=\frac{1}{\pi}\lim_{v\rightarrow 0^{+}}\int_{a}^{b}\Im m_{F}(x+iv)\mathrm{d}x, %\label{1.3}
\end{align*}
where $ a $, $ b $ are continuity points of $ F $. 

The primary motivation for using the Stieltjes transform stems from the fact that for any $n\times n$ matrix $A$ with real eigenvalues, the Stieltjes transform of the ESD of $A$, denoted by $m_A(z)$, is given by 
$$m_A(z)=\frac1n\mathrm{tr}(A-zI)^{-1}.$$
Additional important properties of the Stieltjes transform can be explored in Lemma 2.2 of \cite{Shohat} and Theorems A.2, A.4, A.5 of \cite{Krein}:
If $f$ is analytic on $\mathbb{C}^+$, both $f(z)$ and $zf(z)$ map $\mathbb{C}^+$ into $\mathbb{C}^+$, and there is a $\theta\in(0,\pi/2)$ for which $zf(z)\to c$, finite, as $z\to\infty$ restricted to $\{ w\in\mathbb{C} : \theta< \arg w < \pi-\theta\}$, then $c$ is real, $c < 0$ and $f$ is the Stieltjes transform of a measure on the nonnegative reals with total mass $-c$.
Notice that any Stieltjes transform maps $\mathbb{C}^+$ to $\mathbb{C}^+$.   
Notice also that for any finite measure $\mu$ on the nonnegative reals, we also have $zm_{\mu}(z)$ maps $\mathbb{C}^+$ to $\mathbb{C}^+$ since with $z=x+iv$, 
$$\Im zm_{\mu}(z)=\int\frac{\lambda v}{\vert \lambda-z\vert^2}\mathrm{d}\mu(\lambda)>0.$$

Consider any sequence ${\mu_n}$ of probability measures. If it is established that there exists a countable number of $z_j\in\mathbb C^+$ with an accumulation point, such that the Stieltjes transform
$$m_{\mu_n}(z)\equiv\int\frac1{\lambda-z}\mathrm{d}\mu_n(\lambda)$$
converges for each of these $z_j$, then we can conclude that $\mu_n$ converges vaguely to a subprobability measure $\mu$, and weakly if $\mu$ is a probability measure.
Indeed, consider any vaguely convergent subsequence $\mu_{n_i}$, converging to $\hat\mu$. 
Given that both the real and imaginary parts of $1/(\lambda -z)$, viewed as functions of $\lambda$, are continuous and vanish at $\pm\infty$, it follows for these $z_j$,
$$\int\frac1{\lambda-z_j}\mathrm{d}\mu_{n_i}\to\int\frac1{\lambda-z_j}\mathrm{d}\hat\mu,\quad\text{ as }i\to\infty.$$
This limiting analytic function is uniquely determined by the values it takes on the $z_j$.  Therefore we have vague convergence of $\mu_n$ to a unique $\mu$, and the convergence is weak if $\mu$ is proper.   
   
Then we will state that the convergence is actually weak in Zhou et al. \cite{zhou2022limiting}.   
Let $U_n\Lambda_nU_n^*$ be the spectral decomposition of $C_n$. As mentioned previously, the quantity $g$ is the limit of 
$$g_n(z)=\frac1n\mathrm{tr}(C_n-zI)T_n=\frac1n \mathrm{tr} T_n^{1/2} U_n(\Lambda_n-zI)^{-1} U_n^*T_n^{1/2}=\frac1n\sum_{i=1}^n\frac{\vert T_n^{1/2}U_n\vert^2_{ii} }{\lambda_i-z},$$
where the $\lambda_i$ are the eigenvalues of $C_n$.  
We observe that $g_n$ is the Stieltjes transform of a measure, one that assigns mass to the eigenvalues of $C_n$ with a cumulative mass of $(1/n)\mathrm{tr} T_n$.   
Consequently, similar to $m(z)$, $g_n(z)$ also constitutes the Stieltjes transform of a finite measure on the nonnegative real line. It necessarily follows that the limit $g(z)$, if it exists, ensures both $\Im g(z)$ and $\Im zg(z)$ remain nonnegative.

It follows that, with $z=x+iv$, $v>0$, the absolute value of the imaginary part of the denominator of the integrand in both equations in \eqref{first} is greater than $v$, so that for fixed $x$ and all $v\ge v_0>0$, $z$ times the integrand in the first equation in \eqref{first} is bounded.  
For each $s,t$ as $v\uparrow\infty$ ($x$ fixed), $z$ times the integrand in the first equation of \eqref{first} converges to $-1$.  
Therefore, from the dominated convergence theorem, we have $zm(z)\to -1$ as $v\uparrow\infty$. This implies that the limiting $F$ is a probability distribution function, so that the convergence is weak.

We turn now to the aim of this paper, namely to investigate the limiting properties of the ensemble \eqref{bn}.
It is clear that $C_n$ is of this form, so it seems $B_n$ is a more general ensemble.  
However, it is shown in the appendix that the results on $C_n$ can be used to show the limiting behavior of the ESD of the eigenvalues of $B_n$. Restricting  $T_n$ to remain nonsingular, we can infer the limiting behavior.  
Indeed, write
\begin{equation}\label{bnwithinverse}B_n=\frac1NT_n^{1/2}(T_n^{-1/2}R_n+X_n)(T_n^{-1/2}R_n+X_n)^*T^{1/2},
\end{equation}
which is in the form of $C_n$ with the two dimensional distribution function
\begin{equation}\label{hnwithinverse}H'_n(s, t) = n^{-1} \sum_{i=1}^{n} I(t_i^{-1}s_i \le s, t_i \le t).
\end{equation}
Writing $H_n(s, t) = n^{-1} \sum_{i=1}^{n} I(s_i \le s, t_i \le t)$, if $H_n$ converges weakly to $H$ and if $H(\infty,0)=0$, then $H_n$ and $H'_n$ can be interpreted in terms of distribution functions of random variables $x_n$, $y_n$ where $(x_n,y_n)=(s_i,t_i)$ with probability $1/n$ and $H_n(s,t)=P(x_n\leq s,y_n\leq t)$, $H'_n(s,t)=P(y_n^{-1}x_n\leq s,y_n\leq t)$.  
We can think of $x_n,y_n$ converging in distribution to random variables $x$, $y$, with joint distribution function $H$.  
It follows that $y_n^{-1}x_n,y_n$ converge in distribution to random variables $y^{-1}x,y$, whose distribution function represents the weak limit of $H'_n$, referred to as $H'$.  
With $H'$ placed in the equations in \eqref{first}, the integrals can be interpreted as expected values of functions of $x$ and $y$. This leads us to the subsequent theorem:
\begin{theorem}
	Under assumptions (a) -- (d), for all $c\in(0,1]$, with probability one, the ESD of $\eqref{bn}$ converges weakly to $F$, nonrandom, with
	Stieltjes transform $m_F$ satisfying 
	
	\begin{equation}
		\left\{ 
		\begin{aligned}
			m=\int\frac{\mathrm{d}H(s,t)}{\frac{s}{1+cg}-(1+cmt)z+t(1-c)}, \\
			g=\int\frac{t\mathrm{d}H(s,t)}{\frac{s}{1+cg}-(1+cmt)z+t(1-c)}. \label{second}
		\end{aligned}\right.
	\end{equation}	

\end{theorem}

\begin{remark}
	From the proof, it is worth noting that the condition $c\leq 1$ is not necessary in this theorem. However, this restriction can guarantee the establishment of the following formula \eqref{Vlessthanz}, which is very useful in the proof of the existence of the density function, and we will give a remark later to illustrate the necessity of the existence of the condition. For the unity of conditions, $c\leq 1$ is required here.
\end{remark}

Using truncation techniques, the appendix will show how results in Zhou et al. \cite{zhou2022limiting} extend to $B_n$.  
It will also include a complete proof of almost sure weak convergence of the ESD for all positive $c$.  
This paper shows that, for $c\leq1$ and conditions on the limiting $H$, including the assumption that $H$ has bounded support,  the limiting distribution function $F$ has, for $x\neq0$ a continuous density, analytic in its support, along with a detailed analysis of how the support of $ F $ can be determined.  This is shown in the next section.

\section{Existence of a density}\label{sec2}

It is more convenient to consider the limiting ESD of the eigenvalues of the $N\times N$ matrix
\begin{equation}
	\underline{B}_n=\frac{1}{n}(R_n+T^{\frac{1}{2}}_n X_n)^*(R_n +  T^{\frac{1}{2}}_n X_n). \label{Bnul}
\end{equation}
The eigenvalues of $B_n$ are same as those of $\underline B_n$, except for $ \lvert n-N\rvert $ zero eigenvalues. 
It follows that their ESDs and Stieltjes transforms have  the  following relations
\begin{equation}
\begin{aligned}
F^{\underline{B}_n}=\left(1-\frac{n}{N} \right)I_{[0,\infty)}+\frac{n}{N} F^{ B_n}, \label{BBnul}
\\
m^{\underline{B}_n}(z)=-\frac{\left(1-\frac{n}{N} \right)}{z}+\frac{n}{N}m^{ B_n}. %\label{3.1}
\end{aligned} 
\end{equation}
Then making a variable transformation
\begin{equation}
\begin{aligned}
&\underline{m}(z)=-\dfrac{1-c}{z}+cm(z),  \\ \label{vartrans}
&\underline{g}(z)=-\dfrac{1}{z(1+cg(z))},   
\end{aligned}
\end{equation}
where $ \underline{m}(z)$ is the Stieltjes transform of the LSD of $ F^{\underline{B}_n} $ .
Then the equations in \eqref{second} become
\begin{equation}
\begin{aligned}
z=-\dfrac{1-c}{\underline{m}}-\frac{c}{\underline{m}}\int \dfrac{\mathrm{d} H(s,t)}{1 + s\underline{g}(z) + t\underline{m}(z)},
\\
z=-\dfrac{1}{\underline{g}}+c\int \dfrac{t\mathrm{d} H(s,t)}{1 + s\underline{g}(z) + t\underline{m}(z)}. \label{third}
\end{aligned}
\end{equation}

Let $\underline F$ denote the almost sure limiting distribution function of the eigenvalues of $\underline B_n$, with Stieltjes transform 
$$m_{\underline F}(z)=\int\frac1{\lambda-z}\mathrm{d}\underline F(\lambda),\quad z\in\mathbb C^+.$$
We begin with deriving an important identity.
Write $  \underline{m}= \underline{m}_1+i\underline{m}_2 $, $ \underline{g} = \underline{g}_1+i\underline{g}_2 $, $ z\underline{m} = (z\underline{m})_1 + i(z\underline{m})_2 $, and $z\underline{g} = (z\underline{g})_1 + i(z\underline{g})_2$. 
Fix $z=x+iv\in\mathbb{C}^+$. %the following identity is established in the Appendix:
Multiplying by $\underline m$ on both sides to the first equation of \eqref{third} and comparing the imaginary part of the 
resulting equation and that of the second equation, we obtain
$$  (z\underline{m})_2 = cA_1 \underline{g}_2 + cB_1 \underline{m}_2,  $$
$$  v =\frac{\underline{g}_2}{\vert \underline{g}\vert^2}-(c A_2 \underline{g}_2 + cB_2\underline{m}_2),  $$
where
\begin{align}
\begin{aligned}
A_j&=&\int \dfrac{st^{j-1}\mathrm{d} H(s,t)}{\vert 1 + s\underline{g} + t\underline{m}\vert^2}, j=1,2,
\\
B_j&=&\int \dfrac{t^{j}\mathrm{d} H(s,t)}{\vert 1 + s\underline{g} + t\underline{m}\vert^2}, j=0,1,2. \label{7216}
\end{aligned}
\end{align}
From this, we have
\begin{equation}\label{eqa0}
V_0\equiv \vert \underline{g}\vert^{-2}-cA_2>0,
\end{equation}
\begin{equation}
\underline{g}_2=V_0^{-1}(cB_2 \underline{m}_2+v), \nonumber
\end{equation}
and consequently
\begin{equation}
V \underline{m}_2+V_0^{-1} c A_1v =(z \underline{m})_2, \label{7218}
\end{equation}
where 
\begin{align}
\begin{aligned}
V\equiv c^2 A_1 B_2 V_0^{-1} + cB_1. \label{7219}
\end{aligned}
\end{align}

We now prove the following proposition.
\begin{proposition}\label{pro2}
	For $c<1$, $V=c^2 A_1 B_2 V_0^{-1} + cB_1<\vert z \vert .$
\end{proposition}
\begin{proof}[Proof of Proposition \ref{pro2}]
Multiplying by $\underline{m} $ and dividing by $ z $ on both sides of the first equation of \eqref{third} and 
comparing the imaginary parts we obtain
\begin{align}
\begin{aligned}
\underline{m}_2\vert z \vert^2 = v(1 - c) + cB_0v + cA_1 (z\underline{g})_2 + cB_1 (z\underline{m})_2. \label{7220}
\end{aligned}
\end{align}
Dividing $ z $ on both sides of the second equation of \eqref{third}, we have
\begin{align}
\begin{aligned}
0=(z\underline{g})_2 \vert \underline{g}\vert^{-2}-cB_1v-cA_2(z\underline{g})_2 -cB_2 (z\underline{m})_2.  \label{7221}
\end{aligned}
\end{align}
From \eqref{7221}, we have
$$ (z\underline{g})_2=V_0^{-1} \left(cB_2 (z\underline{m})_2 +cB_1v \right).$$
Then, from \eqref{7220}, we have
$$ \underline{m}_2 \vert z \vert^2 =V(z\underline{m})_2 + (1-c)v + cB_0 v+c^2 A_1 B_1vV_0^{-1}. $$
Substituting this into \eqref{7218} above, we obtain
\begin{equation} \label{7222}
\underline{m}_2 \vert z \vert^2 =V^2\underline{m}_2+(VcA_1+c^2 A_1 B_1)vV_0^{-1}+ (1-c)v+cB_0v.
\end{equation}
%where the $A_j$, $B_j$ $V_0$, and $V$ are defined in \eqref{7216}, \eqref{V0}, and \eqref{7219}, respectively.
We see, then, for $c\leq1$
%Hence ({\cor needs $y\le1$}),
\begin{align}\label{Vlessthanz}
V=c^2A_1B_2V_0^{-1} + cB_1 < \vert z \vert.
\end{align}
This completes the proof of this proposition.
\end{proof}

\begin{remark}
	When $c>1$, this inequality \eqref{Vlessthanz} is not true for all small $z\in \mathbb{C}^+$, so it must be added that $c\leq 1$. Next, we show that the inequality \eqref{Vlessthanz} does not true when $ z $ is small.
	In fact, when $z$ tends to $0$,  we have 
	\begin{align}\label{remark2}
		\lim_{z\to 0}(-z g_n(z))=\lim_{z\to 0} \dfrac{1}{n} \sum_{j=N+1}^{n} v_j^* T_n v_j \geq 
		\lim_{z\to 0} \dfrac{1}{n} \sum_{j=N+1}^{n} \lambda_j (T_n),
	\end{align}
	where $ v_j, j=N+1,\dots, n $ are the eigenvectors corresponding to the $ n-N $ zero eigenvalues of $ B_n $ and $ \lambda_j(T_n), j=N+1,\dots, n$ are the $ n-N $ smallest eigenvalues of $ T_n $. By Cauchy-Schwarz inequality, we have
	\begin{align*}
		(1-1/c_n)^2 &= \left( \dfrac{1}{n} \sum_{j=N+1}^{n} \lambda_j^{1/2}(T_n) \lambda_j^{-1/2}(T_n)  \right)^2\\
		&\leq \left( \dfrac{1}{n} \sum_{j=N+1}^{n} \lambda_j(T_n) \dfrac{1}{n} \sum_{j=N+1}^{n} \lambda_j^{-1}(T_n)  \right)\\
		&\leq \left( \dfrac{1}{n} \sum_{j=N+1}^{n} \lambda_j(T_n) \dfrac{1}{n} \sum_{j=1}^{n} \lambda_j^{-1}(T_n)  \right).
	\end{align*}
By \eqref{remark2}, it is obvious that 
$$ \lim_{z\to0} (-zg_n(z))\le \frac1n\sum_{j=1}^n\lambda_j(T_n)\to \lambda_1.$$
Thus when $ n\to \infty $, once $(-zg_n(z))$ tends to a limit, then the limit should be falling in the interval $\big((1-1/c)^2/ \lambda_{-1}, \lambda_1)$.
Therefore, if $ g(z) = -1/(z (1 + cg(z)) \to \ug_0,$ then $\ug_0$ should be a positive constant. Then, by the second equation of \eqref{third}, $ \um(z) $ tends to $ \um_0=\um(0)$ which should be finite and positive. 
When considering the limit of $\um(z)$ and $\ug(z)$, we may allow $z\to 0$ through $\mathbb C^+$. 
Hence $(0,\um(0),\ug_0)$ is an extended solution to the equations (\ref{third}). This implies that $\ug_0$ and $\um(0)$ are unique. 
Consequently, $ V=\lim_{z\to 0}V(z)>0 $. So, the inequality \eqref{Vlessthanz} does not hold. 
\end{remark}

In order to obtain the properties of the LSD more conveniently, we give a preliminary result.
It is stated under conditions sufficient for this paper. 

\begin{lemma} \label{th77}
	We assume $c\leq1$. Under the assumptions on $H$  %of Theorem \ref{th73} 
	and the additional condition that $ \lambda_j \leq K $, $ j = -1, 1, $ for some constant $ K $, where $ \lambda_j = \left(\int t^j \mathrm{d}H(s, t)\right)^{1/2} $, we have
	\begin{description}
		
		\item[(a)] Under the condition that $ \int(s/t )\mathrm{d}H(s, t) \leq 1/c,$ for $ 0 < m \leq \vert z \vert \leq M < \infty, z\in\mathbb{C}^+ $, there exist $ 0 < \delta < \Delta < \infty  $, such that
		$$ \delta \le \vert \underline{m}(z)\vert \leq \Delta \mbox{ and }  \delta \le \vert \underline{g}(z)\vert.  $$
				
		\item[(b)] All the four quantities $ A_j $, $ B_j $, $ j=1,2 $ are bounded.
		
		\item[(c)] If $ \lim_{z_n\in\mathbb{C}^+}  z_n \to z $, $ \underline{m}(z_n) \to \tilde{m}(z) $ and $ \underline{g}(z_n) \to  \tilde{g}(z) $, then $ (z, \tilde{m}(z), \tilde{g}(z)) $ satisfies equations \eqref{third}.
		We call any triple $ (z, \tilde{m}(z), \tilde{g}(z)) $ formed in this way an ``extended solution".
		
		\item[(d)] For each $z\neq0$, the extended solutions are unique.
	\end{description}
\end{lemma}
\begin{remark} \label{re78.}
	The condition $ \int(s/t )\mathrm{d}H(s, t) \leq 1/c $ is one related to the ratio of signal to noise (RSN). We have difficulty proving the lower bound without this condition. 
	This condition means that the average RSN of each sample vector is not more than 1. 
	That is 
	\begin{align*}
	1 \geq \frac{1}{N} \sum_{k=1}^{N} T_n^{-\frac{1}{2}}(N^{-\frac{1}{2}} r_k)(N^{-\frac{1}{2}} r_k)^* T_n^{-\frac{1}{2}} = \frac{n}{N} \frac{1}{n} \mathrm{tr}(\frac{1}{N}R_n R_n^*) T_n^{-1}
	\to  c\int s/t  \mathrm{d}H(s, t).
	\end{align*}
	Thus, this condition seems reasonable. However, we hope the lower bound for $ m(z) $ can be proved without this additional condition.
\end{remark}
\begin{proof}[Proof of Lemma \ref{th77}]
	If (a) is not true for the upper bound, then there exists $ \underline{m}(z_n) \to \infty $. 
	Noting that, from \eqref{Vlessthanz}, $ cB_1 \leq \vert z \vert \leq M $. With the assumption $ \underline{m}(z_n) \to \infty $, it implies the right hand side of the first equation of \eqref{third} tends to zero while the left hand side does not.   
	The contradiction shows that  $ \underline{m}(z) $ is bounded from above. 
	Now, suppose there is a sequence $ \{z_n\} $ such that $ \underline{g}(z_n) \to 0 $. By the second equation of \eqref{third}, we have 
	$$ z_n \underline{g}(z_n)=-1+c\int \dfrac{t\underline{g}(z_n)}{1+s\underline{g}(z_n) +t\underline{m}(z_n)} \mathrm{d} H(s,t). $$ 
	As indicated by the second estimate in \eqref{7224}, the second term on the right side of the above and the left hand side both tend to zero, which results in a contradiction. 
	
	Finally, we show that $ \underline{m}(z) $ is bounded from zero. 
	Now, suppose there is a bounded sequence $ \{z_n\} $  such that 
	$ \underline{m}(z_n) \to 0 $. 
	From the Cauchy-Schwarz inequality, we find for some positive $M_1$,
	\begin{equation}
		\begin{aligned}
			\int \dfrac{\mathrm{d}H(s,t)}{\vert 1 + s\underline{g}(z_n) + t\underline{m}(z_n)\vert}\leq \lambda_{-1}\sqrt{B_1}\leq M_1,
			\\
			\int \dfrac{t\mathrm{d}H(s,t)}{\vert 1 + s\underline{g}(z_n) + t\underline{m}(z_n)\vert}\leq \lambda_{1}\sqrt{B_1}\leq M_1. \label{7224}
		\end{aligned}
	\end{equation}
	By the first equation of \eqref{third}, we have
	\begin{align}\label{7225}
	z_n \underline{m}(z_n) = -1 + c - c\int \dfrac{1}{1 + s \underline{g}(z_n) + t \underline{m}(z_n) }\mathrm{d}H(s, t).
	\end{align}
	Since it has been proven that $ \{z_n, \underline{g}(z_n)\} $ is bounded, there is a subsequence $ n' $ such that $ z_{n'} \to z_0  $ and 
	$ \underline{g}(z_{n'}) \to \underline{g}_0 $. 
	The first estimation in \eqref{7224} together with Fatou's Lemma shows that
	$$  \int \frac{1}{\vert 1 + s \underline{g}_0 \vert }\mathrm{d}H(s, t) \leq M_1.  $$
	Consequently, $ 1/(1 + s\underline{g}_0) $ is integrable with respect to $ H $, hence,
	\begin{align*}
	&\left\vert \int \frac{1}{1 + s \underline{g}(z_{n'}) + t\underline{m}(z_{n'}) }\mathrm{d}H(s, t) - \int \frac{1}{1 + s\underline{g}_0}\mathrm{d} H(s, t)    \right\vert  \\
	=&\left\vert \int  \frac{s(\underline{g}_0 - \underline{g}(z_{n'})) -t\underline{m}(z_{n'})}{(1 + s\underline{g}(z_{n'}) + t\underline{m}(z_{n'}))(1 + s\underline{g}_0)}\mathrm{d}H(s, t)     \right\vert \\
	\leq & \vert \underline{g}_0 - \underline{g}(z_{n'}) \vert \sqrt{A_1(n') A_1(0)} + \vert \underline{m}(z_{n'})\vert \vert z_{n'}\vert  \sqrt{B_1(n') B_1(0)},
	\end{align*}
	where $A_1(n')\ (B_1(n'))$ and $ A_1(0)\ (B_1(0)) $ are the $ A\ (B) $ functions defined by
	extended solutions $ (z_{n'}, g(z_{n'}), m(z_{n'})) $ and $ (z_0, g_0, 0) $, respectively. 
	
	In part (b), it will be proven that both the $ A $ and $ B $ functions are bounded, hence, the right hand side of the inequality above tends to zero. 
	Therefore, from \eqref{7225}, we
	obtain
	\begin{align}\label{7226}
	0 = -1 + c - c \int \frac{1}{1 + s\underline{g}_0} \mathrm{d}H(s, t) = -1 + c \int \frac{s\underline{g}_0}{1 + s\underline{g}_0}\mathrm{d} H(s, t) .
	\end{align}
	If the imaginary of $ \underline{g}_0 $ is positive, the equality above could not be true (including the case $ H(\{s = 0\}) = 1 $).  
	By Cauchy-Schwarz inequality, we have
	\begin{align*}
	&\int \frac{\vert s \underline{g}_0\vert}{\vert 1 + s \underline{g}_0\vert}\mathrm{d}H(s, t) \leq
	\left( \int(s/t)\mathrm{d}H(s, t) \int \frac{st \vert \underline{g}_0\vert^2}{\vert 1 + s\underline{g}_0 \vert^2}\mathrm{d}H(s, t)  \right)^{\frac{1}{2}}\\
	=& \sqrt{\int(s/t)\mathrm{d}H(s, t)\vert \underline{g}_0^2\vert A_2 }< \frac{1}{c}.
	\end{align*}
	Here the last inequality follows from the additional condition and \eqref{eqa0}.
	We reach a contradiction to \eqref{7226} and the proof of (a) is complete.
	
	Proof of (b). From \eqref{Vlessthanz}, we have seen that $ B_1\leq{\vert z\vert}/{c}\leq{M}/{c}$ by assumptions. 
	Also, we have from \eqref{eqa0} that $ A_2 \leq c^{-1}\vert \underline{g}\vert^{-2} \leq M_2 $ for some positive $M_2$, since $ \vert \underline{g}\vert \geq \delta > 0 $ proven in (a). Next, by \eqref{Vlessthanz}  we have for some  $ M_3 $,
	\begin{align}
	A_1B_2 \leq \vert z\vert \vert \underline{g}\vert^{-2}/c^2 < M_3. \label{eqjoint}  
	\end{align}
	 
	To show $A_1$ is bounded from above, we claim that there is a positive $t_0$ such that
	\begin{align*}
	\int\frac{I_{\{t>t_0\}}s\mathrm{d}H(s,t)}{\vert 1+s\underline{g}+t\underline{m}\vert^2}\ge 0.5A_1.
	\end{align*}
	If not, then for $t_0=0$,
	\begin{align*}
	\int\frac{I_{\{t>0\}}s\mathrm{d}H(s,t)}{\vert 1+s\underline{g}+t\underline{m}\vert^2}\le 0.5A_1,
	\end{align*}
	would imply either $A_1=0$ or $H({t=0})>0$, both of which contradict the assumption that $\lambda_{-1}\le K$.
	Consequently, we have
	\begin{align*}
	A_1&\le 2\int\frac{I_{\{t>t_0\}}s\mathrm{d}H(s,t)}{\vert 1+s\underline{g}+t\underline{m}\vert^2}\le \frac2{t_0}\int\frac{I_{\{t>t_0\}}st\mathrm{d}H(s,t)}{\vert 1+s\underline{g}+t\underline{m}\vert^2} \le  \frac2{t_0}A_2\le M.
	\end{align*}
	Then, we conclude that $ A_1 $ is bounded from above.
	
	Assuming that $B_2$ is not bounded for the bounded $z$, there must be a $z_0\in \mathbb C^+$such that $B_2(z_0)=\infty$.
	According to (\ref{eqjoint}), it results in $A_1(z_0)=0$.
	This implies that the marginal for $ s $ of $ H $ is  concentrated at zero. Therefore, 
	\begin{eqnarray*}
		B_2=\int\frac{t^2\mathrm{d}H(s,t)}{\vert 1+t\underline{m}(z_0)\vert^2}=\infty.
	\end{eqnarray*}
	This equation implies that $\underline{m}(z_0)$ is real and not positive. By part (a), $\underline{m}(z_0)\le -\delta$ for some $\delta>0$. If $t>2/\delta$, then $\vert 1+t\underline{m}(z_0)\vert >t\delta -1>0.5t\delta$,  therefore
	\begin{eqnarray*}
		\int\frac{I_{\{t\le 2/\delta\}}t^2\mathrm{d}H(s,t)}{\vert 1+t\underline{m}(z_0)\vert^2}=\infty.
	\end{eqnarray*}
	This is impossible because
	\begin{eqnarray*}
		&&\int\frac{I_{\{t\le 2/\delta\}}t^2\mathrm{d}H(s,t)}{\vert 1+t\underline{m}(z_0)\vert^2}
		\le \frac2{\delta}\int\frac{I_{\{t\le 2/\delta\}}t\mathrm{d}H(s,t)}{\vert 1+t\underline{m}(z_0)\vert^2}\le \frac2{\delta}B_1\le M.
	\end{eqnarray*}
	Thus the contradiction proves that $ B_2 $ is bounded from above.
	
	For (c), by applying Fatou's Lemma when the limiting $z$ is real, the same bounds on $A_j$, $B_j$ hold true for extended solutions to equation \eqref{third}.  Also, from \eqref{7224}, using Fatou's Lemma, the integrals on the right side of \eqref{third} also exist and are bounded.
	We have
	\begin{align*}
	\begin{split}
	&\int\dfrac{\mathrm{d}H(s,t)}{1+s\underline{g}(z_n)+t\underline{m}(z_n)}-\int\dfrac{\mathrm{d}H(s,t)}{1+s\tilde{g}
		+t\tilde{m}}\\
	=&(\tilde{g}-\underline{g}(z_n)) \int \dfrac{s\mathrm{d}H(s,t)}{(1 + s\underline{g}(z_n) +t\underline{m}(z_n))(1 + s\tilde{g} + t\tilde{m})} \\&
	+(\tilde{m}- \underline{m}(z_n)) \int \dfrac{t\mathrm{d}H(s,t)}{(1 + s\underline{g}(z_n) + t\underline{m}(z_n))(1 + s\tilde{g} + t\tilde{m})} \to 0,\\
	&\int\dfrac{t\mathrm{d}H(s,t)}{1+s\underline{g}(z_n)+t\underline{s}(z_n)}-\int\dfrac{tdH(s,t)}{1+s\tilde{g}
		+t\tilde{m}}\\
	=&(\tilde{g}-\underline{g}(z_n)) \int \dfrac{st\mathrm{d}H(s,t)}{(1 + s\underline{g}(z_n)+t\underline{m}(z_n))(1 + u\tilde{g} + t\tilde{m})} \\&
	+(\tilde{m}- \underline{m}(z_n)) \int \dfrac{t^2\mathrm{d}H(s,t)}{(1 + s\underline{g}(z_n) + t\underline{m}(z_n))(1 + u\tilde{g} + t\tilde{m})} \to 0.
	\end{split}
	\end{align*}
	With $A_j(n),B_j(n)$, $A_j,B_j$ having their obvious meanings, for some positive $M$, we have
	\begin{align*}
	%\begin{split}
	&\int \dfrac{s\mathrm{d}H(s,t)}{\vert (1 + s\underline{g}(z_n) + t\underline{m}(z_n))(1 + s\tilde{g} + t\tilde{m})\vert}
	\leq \sqrt{A_1(n)A_1} \leq M,\\
	&\int \dfrac{t\mathrm{d}H(s,t)}{\vert (1 + s\underline{g}(z_n) + t\underline{m}(z_n))(1 + s\tilde{g} + t\tilde{m})\vert}
	\leq \sqrt{B_1(n)B_1} \leq M,\\
	&\int \dfrac{st\mathrm{d}H(s,t)}{\vert (1 + s\underline{g}(z_n) + t\underline{m}(z_n))(1 + s\tilde{g} + t\tilde{m})\vert}
	\leq \sqrt{A_2(n)A_2 }\leq M,\\
	&\int \dfrac{t^2\mathrm{d}H(u,t)}{\vert(1 + s\underline{g}(z_n) + t\underline{m}(z_n))(1 + s\tilde{g} + t\tilde{m})\vert}
	\leq \sqrt{B_2(n)B_2} \leq M.
	%\end{split}
	\end{align*}

    Proof of (d). Take note that in (b), both $A_1$ and $B_2$ are bounded from below. Thus, based on \eqref{Vlessthanz}, $V_0^{-1}$ is also bounded. Therefore, through the application of Fatou's Lemma and considering that $\underline g$ is bounded from below, we have from \eqref{Vlessthanz} \begin{equation}
    	V=c^2A_1B_2V_0^{-1} + cB_1 \leq \vert z \vert \label{Vlessthanorequalz},
    \end{equation}
    which holds for all $z$ with $\Im z\ge0$.
	
	Suppose for $z\ne 0$ with $\Im z\ge0 $, there are two sets of extended solutions 
	$ (\underline{m}_{(j)}, \underline{g}_{(j)}) $, $ j = 1, 2. $
	Multiplying $ \underline{m}_{(j)} $ on both sides of the first equation of \eqref{third} and taking the difference on both sides, we obtain
	\begin{align}
	\begin{split}
	&z(\underline{m}_{(1)}-\underline{m}_{(2)})=c\tilde{A}_1(\underline{g}_{(1)}-\underline{g}_{(2)})
	+c\tilde{B}_1(\underline{m}_{(1)}-\underline{m}_{(2)}),\\ 
	&0=\dfrac{\underline{g}_{(1)}-\underline{g}_{(2)}}{\underline{g}_{(1)}\underline{g}_{(2)}}
	-c\tilde{A}_2(\underline{g}_{(1)}-\underline{g}_{(2)})-c\tilde{B}_2(\underline{m}_{(1)}-\underline{m}_{(2)}),\label{7227}
	\end{split}
	\end{align}
	where 
	$$\tilde{A}_j=\int \dfrac{st^{j-1} \mathrm{d}H(s,t)}{(1+s\underline{g}_{(1)} +t\underline{m}_{(1)})(1+s\underline{g}_{(2)}+t\underline{m}_{(2)})}, \quad j=1,2$$
	$$\tilde{B}_j=\int\dfrac{t^{j}\mathrm{d}H(s,t)}{(1+s\underline{g}_{(1)}+t\underline{m}_{(1)})(1+s\underline{g}_{(2)}+t\underline{m}_{(2)})},\quad  j=0,1,2.$$
	
	Let $ A^{(i)}_j  $, $ B^{(i)}_j  $, $ i, j = 1, 2 $ be the $ A_j $, $ B_j $ functions defined in \eqref{7216} by the $ i $-th extended solutions.  
	Then by Cauchy-Schwarz inequality, we obtain 
	$$\vert c\tilde A_2\vert \leq c(A_2^{(1)})^{1/2}(A_2^{(2)})^{1/2}<\frac1{\vert \underline{g}_{(2)}\vert \vert  \underline{g}_{(1)}\vert},$$
	yielding
	$$\left\vert\frac1{\underline{g}_{(2)} \underline{g}_{(1)}}-c\tilde A_2\right\vert \ge\left\vert \frac1{\vert \underline{g}_{(2)}\vert \vert \underline{g}_{(1)}\vert}-\vert c\tilde A_2\vert \right\vert \ge\frac1{\vert \underline{g}_{(2)}\vert \vert \underline{g}_{(1)}\vert}-c\sqrt{A_2^{(1)}A_2^{(2)}}
	>0.$$
	Consequently, the second equation in \eqref{7227} implies that $\um_{(1)}=\um_{(2)}$ if and only if $\ug_{(1)}=\ug_{(2)}$.
	Suppose $ \underline{m}_{(1)}\ne\underline{m}_{(2)}$, we can then write
	$$\frac{\underline{g}_{(1)}-\underline{g}_{(2)}}{\underline{m}_{(1)}-\underline{m}_{(2)}}=
	\frac{c\tilde{B}_2}{\frac1{\underline{g}_{(2)} \underline{g}_{(1)}}-c\tilde A_2}.$$
	From \eqref{7227} and \eqref{Vlessthanorequalz}, applying Cauchy-Schwarz inequality, and using the inequality
	$$(ab-cd)^2\ge(a^2-c^2)(b^2-d^2),\text{ for real }a,b,c,d,$$
	we derive
	\begin{align}
	\begin{split}
	\vert z\vert=&\left\vert\frac{c\tilde{B}_2c\tilde{A}_1}{\left( \frac{1}{\underline{g}_{(2)} \underline{g}_{(1)}} \right)-c\tilde{A}_2 }+c\tilde{B}_1 \right\vert 
	\leq  \dfrac{c^2\sqrt{B^{(1)}_2 B^{(2)}_2 A^{(1)}_1 A^{(2)}_1}}{\left( \frac{1}{\vert \underline{g}_{(2)}\vert \vert \underline{g}_{(1)}\vert}  \right)-c\sqrt{A^{(1)}_2 A^{(2)}_2} } 
	+c\sqrt{B^{(1)}_1 B^{(2)}_1}\\ 
	\leq &\left(    \left( \dfrac{c^2 B^{(1)}_2 A^{(1)}_1}{\left( \frac{1}{\vert \underline{g}_{(1)}\vert^2} -cA^{(1)}_2 \right) } +c B^{(1)}_1 \right)    
	\left( \dfrac{c^2 B^{(2)}_2 A^{(2)}_1}{\left( \frac{1}{\vert\underline{g}_{(2)}\vert^2} -cA^{(2)}_2 \right) } +c B^{(2)}_1 \right)             \right)^{\frac{1}{2}} \\
	\leq& \vert z\vert.\label{7228}%{\cor\text{needs }y\le1}\label{7228}
	\end{split}
	\end{align}
	It follows that when $v>0$, \eqref{Vlessthanz} is strict and thus we get an immediate contradiction.  
		
	Now, consider the case that $v=0$ and $z=x\in\mathbb R$, $x\neq0$.   
	For the case, $\Im \um_{(1)}=0$ or $\Im \um_{(2)}=0 $, dividing by $v$ both side of \eqref{7222} and then making $v$ tends to $0$, we get that \eqref{Vlessthanz} holds strictly, because 
	\begin{align*}
		\lim_{v\to 0} \frac{\um_2(z)}{v} 
		&=\lim_{v\to 0} \dfrac{i\Im(\um(x+iv)-\Im(\um(x)))}{x+iv-x}\\
		&=\lim_{v\to 0} \Im i\dfrac{\um(x+iv)-\um(x)}{x+iv-x} \to \Im(i\um'(x))=\um'(x) >0.
	\end{align*}
	This leads to a contradiction also in \eqref{7228}.
	
	Then we consider the case that $v=0$ and $\Im \um_{(j)}>0$.      
	In order to have the last inequality in \eqref{7228} to be an equality, we need to have the factors in \eqref{7228} are both $\vert z \vert $ and the two vectors
	$$\left(\sqrt{\frac{c^2A_1^{(1)}B_2^{(1)}}{\frac1{\vert \underline g_{(1)}\vert^2} -cA_2^{(1)}}}, \sqrt{cB^{(1)}_1}\right), \left(\sqrt{\frac{c^2A_1^{(2)}B_2^{(2)}}{\frac1{\vert\underline g_{(2)}\vert^2} -cA_2^{(2)}}},\sqrt{cB^{(2)}_1}\right)$$ 
	are proportional to each other, say the first vector is $k$ times the second vector, necessarily $k$ is positive, so we have 
	$$\vert z \vert =\left( \dfrac{c^2 B^{(1)}_2 A^{(1)}_1}{\left( \frac{1}{\vert \underline{g}_{(1)}\vert^2} -cA^{(1)}_2 \right) } +c B^{(1)}_1 \right)=k^2\left( \dfrac{c^2 B^{(2)}_2 A^{(2)}_1}{\left( \frac{1}{\vert\underline{g}_{(2)}\vert^2} -cA^{(2)}_2 \right) } +c B^{(2)}_1 \right)=k^2\vert z \vert.$$ 
	Therefore, $k$ must be 1.  
	Hence, we have $B_1^{(1)}=B_2^{(2)}$. 
	In the last step of application of Cauchy-Schwarz inequality to get $\vert \bbE(X Y)\vert = \sqrt{\bbE(\vert X\vert^2)\bbE(\vert Y\vert^2)}= \bbE(\vert X\vert^2)=\bbE(\vert Y\vert^2)$, one needs  $ X=\overline Y$ which is impossible because the imaginary parts of both $X$ and $Y$ are positive. 
	So we obtain at least one contradiction to \eqref{7228}. 
	Thus, we finally proved that it is impossible to have  $\underline m_{(1)}\ne \underline m_{(2)}.$ 
	This completes the proof of the theorem.
\end{proof}

Dozier and Silverstein \cite{dozier2007analysis} extended the analysis for LSD of generalized M-P law to the LSD of the non-central sample covariance matrix.
Theorem 2.1 in \cite{dozier2007analysis} can be extended to the solution for model \eqref{Bnul}. We shall prove the following theorem under the assumption $c\leq1$ and the conditions imposed on the limiting $H$ in Theorem \ref{th77}.  

\begin{theorem}\label{th713} 
	Assume $c\leq1$ and the conditions imposed in the limiting $H$ in Theorem \ref{th77}.  
	Suppose $\um(x) $ is the solution to \eqref{third} for $x\neq0$. Then 
	%for all 
	$ x \in \mathbb{R}\backslash\{0\} $, $ \lim_{z\in \mathbb{C}^+ \to x}m_{\underline F} (z) \equiv \um(x) $ exists. 
	The function $\um$ is continuous on $ \mathbb{R}\backslash\{0\} $, and $ F $ has a continuous derivative $ f $ on 
	$ \mathbb{R}\backslash\{0\} $ given by $ f(x) = \frac{1}{\pi} \Im \um(x) $. Furthermore, if $ \Im 
	\um(x) > 0$  $ (f(x) > 0)  $ for $ x \in \mathbb{R}^+ $, then %$ m(x) $ is a solution to \eqref{712} for $ z = x $, and 
	the density $ f $ is analytic about $ x $.
\end{theorem}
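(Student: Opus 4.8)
The plan is to read off the first three claims directly from Theorem~\ref{th77}, and to obtain analyticity from the analytic implicit function theorem applied to \eqref{third}; the only nonroutine ingredient --- and the step I expect to be the main obstacle --- is the invertibility of the Jacobian of that system at a point where $\Im\underline m>0$. For existence and continuity of $\underline m$, fix $x\neq0$ and take $z_n\in\mathbb C^+$ with $z_n\to x$. By Theorem~\ref{th77}(a) the pairs $(m_{\underline F}(z_n),\underline g(z_n))$ eventually lie in a fixed compact subset of $\mathbb C^2$, so every subsequence has a convergent sub-subsequence; by (c) each such limit is an extended solution at $x$, and by (d) it is the unique one. Hence $\lim_{z\to x}m_{\underline F}(z)=\underline m(x)$ exists and equals the unique solution of \eqref{third} at $x$. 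Running the same argument along a sequence of real points $x_n\to x_0$ (the bounds of (a) being uniform on compact subsets of $\mathbb R\setminus\{0\}$), and identifying via (c)--(d) any subsequential limit of $(\underline m(x_n),\underline g(x_n))$ with $(\underline m(x_0),\underline g(x_0))$, shows that $(\underline m,\underline g)$ extends continuously to $\overline{\mathbb C^+}\setminus\{0\}$; in particular $\underline m$ is continuous on $\mathbb R\setminus\{0\}$.

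\emph{The density.} Since $m_{\underline F}$ extends continuously off $0$, for any $[a,b]\subset\mathbb R\setminus\{0\}$ the integrand $\Im m_{\underline F}(x+iv)$ in the inversion formula is bounded on $[a,b]\times(0,1]$ and tends pointwise to $\Im\underline m(x)$; bounded convergence gives $\underline F(b)-\underline F(a)=\frac1\pi\int_a^b\Im\underline m(x)\,\mathrm dx$, so $\underline F$ has a continuous density $\frac1\pi\Im\underline m$ on $\mathbb R\setminus\{0\}$. Passing \eqref{BBnul} to the limit, $\underline F=(1-c)I_{[0,\infty)}+cF$ with the indicator locally constant away from $0$, so the same statement transfers to $F$, yielding the asserted $f=\frac1\pi\Im\underline m$ on $\mathbb R\setminus\{0\}$ (up to the constant $c^{-1}$ of \eqref{BBnul}); for $x<0$ both sides vanish.

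\emph{Analyticity: reduction to a Jacobian.} Fix $x_0>0$ with $\Im\underline m(x_0)>0$ and put $\underline m_0=\underline m(x_0)$, $\underline g_0=\underline g(x_0)$, $D_0=1+s\underline g_0+t\underline m_0$. Taking imaginary parts in the second equation of \eqref{third} at $z=x_0$ gives $V_0\,\Im\underline g_0=cB_2(x_0)\,\Im\underline m_0$, with $V_0=|\underline g_0|^{-2}-cA_2(x_0)$; since $\lambda_{-1}\le K$ forces $H(\{t=0\})=0$ we have $B_2(x_0)>0$, so together with $\Im\underline g_0\ge0$ and $\Im\underline m_0>0$ we get $\Im\underline g_0>0$ and $V_0>0$, and $D_0$ lies in the open upper half-plane for $H$-a.e.\ $(s,t)$. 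Using Theorem~\ref{th77}(b) and $V_0>0$, the functionals $(\underline m,\underline g)\mapsto\int(1+s\underline g+t\underline m)^{-1}\mathrm dH$ and $(\underline m,\underline g)\mapsto\int t(1+s\underline g+t\underline m)^{-1}\mathrm dH$ are holomorphic near $(\underline m_0,\underline g_0)$ (the domination needed to differentiate under the integral being routine), so after multiplying the two equations of \eqref{third} by $\underline m$ and $\underline g$ respectively the system takes the form $\mathbf G(z,\underline m,\underline g)=0$ with $\mathbf G$ holomorphic near $(x_0,\underline m_0,\underline g_0)$. Provided $\partial\mathbf G/\partial(\underline m,\underline g)$ is nonsingular there, the analytic implicit function theorem produces a holomorphic solution $(\underline m(z),\underline g(z))$ of \eqref{third} on a complex neighbourhood $U$ of $x_0$ taking the value $(\underline m_0,\underline g_0)$ at $x_0$; for $z\in U\cap\mathbb C^+$ the uniqueness coming from \eqref{Vlessthanz} makes it coincide with $m_{\underline F}(z)$, hence by the continuity above with $\underline m$ on $U\cap\mathbb R$, so $\underline m$ --- and therefore $f=\frac1\pi\Im\underline m$, via Schwarz reflection applied to the imaginary part --- is real-analytic about $x_0$.

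\emph{The obstacle: nonsingularity of the Jacobian.} A null vector $(\dot{\underline m},\dot{\underline g})$ of $\partial\mathbf G/\partial(\underline m,\underline g)$ at $(x_0,\underline m_0,\underline g_0)$ satisfies the linearisation of the multiplied system, which is exactly \eqref{7227} with both solutions collapsed to $(\underline m_0,\underline g_0)$ --- so the coefficients $\tilde A_j,\tilde B_j$ there become $\int st^{j-1}D_0^{-2}\mathrm dH$ and $\int t^jD_0^{-2}\mathrm dH$ --- and with $\underline m_{(1)}-\underline m_{(2)},\,\underline g_{(1)}-\underline g_{(2)}$ replaced by $\dot{\underline m},\dot{\underline g}$. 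Following the argument in the proof of Theorem~\ref{th77}(d): from the second collapsed equation together with $|c\tilde A_2|\le cA_2(x_0)<|\underline g_0|^{-2}$ one gets $\dot{\underline m}=0\iff\dot{\underline g}=0$; if $\dot{\underline m}\neq0$, dividing and running the Cauchy--Schwarz chain that terminates in \eqref{7228} (with \eqref{Vlessthanorequalz} at $v=0$ as the outer bound, and using that $V(x_0)=|x_0|$ when $\Im\underline m_0>0$) forces every inequality there, the triangle inequality included, to be an equality; the Cauchy--Schwarz equalities then force $D_0$ to have constant argument for $H$-a.e.\ $(s,t)$, which --- since $\Im\underline g_0>0$ --- can happen only when $\mathrm{supp}\,H$ is degenerate: either $t$ is $H$-a.e.\ constant, in which case \eqref{third} reduces to the Dozier--Silverstein form and the conclusion follows from \cite{dozier2007analysis}, or else the constant-argument condition forces $\underline g_0\in\mathbb R$, contradicting $\Im\underline g_0>0$. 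Hence the Jacobian is nonsingular. This Jacobian step is where I expect the real work to lie: aligning the collapsed linear system with \eqref{7227}--\eqref{7228}, and deducing from the \emph{full} equality case of \eqref{7228} --- not merely a single Cauchy--Schwarz inequality --- that no nontrivial null vector can occur when $\Im\underline m(x_0)>0$, while peeling off the genuinely degenerate shapes of $\mathrm{supp}\,H$ by hand.
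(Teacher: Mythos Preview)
Your overall strategy coincides with the paper's: existence/continuity via Theorem~\ref{th77}(a)(c)(d), the density via the Stieltjes inversion formula (the paper simply cites \cite{dozier2007analysis} for this), and analyticity via the analytic implicit function theorem applied to the pair $(G_{\um},G_{\ug})$ of \eqref{7329}. The only substantive point is the nonsingularity of the Jacobian, and here your reformulation --- a null vector of $\partial(G_{\um},G_{\ug})/\partial(\um,\ug)$ satisfies exactly the collapsed version of \eqref{7227} with $\tilde A_j,\tilde B_j$ replaced by $\hat A_j,\hat B_j$ --- is correct and is essentially the paper's determinant computation read backwards.

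Where you go astray is the endgame. You expect to have to ``peel off degenerate shapes of $\operatorname{supp}H$ by hand'' and you propose the dichotomy ``either $t$ is $H$-a.e.\ constant, or $\ug_0\in\mathbb R$''. Neither the case split nor the second branch is justified, and more to the point neither is needed. The contradiction is immediate once you track arguments through the whole chain. If the Jacobian were singular, the collapsed first equation of \eqref{7227} gives
\[
x \;=\; \frac{c^2\hat A_1\hat B_2}{\ug_0^{-2}-c\hat A_2}+c\hat B_1,
\]
a \emph{real} number. The equalities in your chain force $\arg D_0\equiv\theta$ for some constant $\theta\in(0,\pi)$ (since $\Im D_0>0$ $H$-a.e.), so $\hat A_j=e^{-2i\theta}A_j$, $\hat B_j=e^{-2i\theta}B_j$, and the reverse-triangle equality $|\ug_0^{-2}-c\hat A_2|=|\ug_0|^{-2}-cA_2$ forces $\arg\ug_0=\theta$ as well. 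Plugging in, the right-hand side above becomes $e^{-2i\theta}V$, which cannot equal the positive real $x$ when $\theta\in(0,\pi)$. That is the whole argument; no reduction to Dozier--Silverstein and no special $H$ is required. The paper runs the same inequalities in determinant form, bounding $|\det|\ge (x-cB_1)V_0-c^2A_1B_2=V_0(|x|-V)=0$ and noting one inequality must be strict --- which is the same fact: either $\arg D_0$ is nonconstant (so some $|\hat B_j|<B_j$) or it is constant in $(0,\pi)$ (so $\hat B_1\notin\mathbb R_{\ge0}$ and $|x-c\hat B_1|>x-c|\hat B_1|$).
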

\begin{proof}[Proof of Theorem \ref{th713}]
	The first conclusion of Theorem \ref{th713} is a special case of Lemma \ref{th77}. Because of (a), (c), and (d), as $z\in\mathbb{C}^+\to x\neq0$, $\um(z)\to \um(x)$, solution to \eqref{third}.  Thus, the Stieltjes transform extends uniquely on $\mathbb{R}/\{0\}$.
	Then, due to Theorems 2.1 and 2.2 of \cite{silverstein1995analysis}, we have that for all $x\neq0$, $\underline F $ is continuously differentiable and the density is given by 
	$$ f(x) = \frac{1}{\pi}\Im(\um(x))=\frac{1}{\pi}\lim_{v \downarrow 0} \Im(\um(x + iv)). $$
	Next, we show that the density $ f(x) $ is analytic when $ \Im(\um(x)) > 0 $. As a result, the denominator of the fractions in \eqref{third} cannot be zero at the point $ (m(x), g(x), x) $. 
	Hence, the two integrals in \eqref{third} are analytic in some neighborhood $ D_x $ of $ (\um(x), \ug(x), x) $. 
	We shall employ the implicit function theorem to show that $ \um(x) $ is analytic in some neighborhood $ D_x $.
	Rewrite the two equations in \eqref{third} as 
	$ G_{\um}(\um, \ug, z) = 0 $ and $ G_{\ug}(\um, \ug, z) =0 $, where
	\begin{align}\label{7329}
		&G_{\um} = z\um + (1 - c) + c \int \dfrac{\mathrm{d}H(s, t)}{1 + s\ug + t\um},\\ \nonumber
		&G_{\ug} = z + \frac{1}{\ug} - c \int \dfrac{t\mathrm{d}H(s, t)}{1 + s\ug + t\um}.
		\end{align}
	Then
	\begin{align*}
		&\frac{\partial}{\partial \um} G_{\um}(\um, \ug, x) = x - \hat{B}_1 c,\\
		&\frac{\partial}{\partial \ug} G_{\um}(\um, \ug, x) = - \hat{A}_1 c, \\
		&\frac{\partial}{\partial \ug} G_{\ug}(\um, \ug, x) = - \frac{1}{\ug^2} + \hat{A}_2 c, \\
		&\frac{\partial}{\partial \um} G_{\ug}(\um, \ug, x) = \hat{B}_2 c,
	\end{align*}
	where
	\begin{align*}
		&\hat{A}_1=\int \dfrac{s\mathrm{d}H(s,t)}{(1 + s\ug + t\um)^2},\\
		&\hat{B}_1=\int \dfrac{t\mathrm{d}H(s,t)}{(1 + s\ug + t\um)^2}, \\
		&\hat{A}_2=\int \dfrac{st\mathrm{d}H(s,t)}{(1 + s\ug + t\um)^2}, \\
		&\hat{B}_2=\int \dfrac{t^2\mathrm{d}H(s,t)}{(1 + s\ug + t\um)^2}.
	\end{align*}
	By the implicit function theorem, $ \um $ and $ \ug $ are analytic in some neighborhood $ D_x $ of $ x $ if the absolute value of the below determinant is	greater than zero.  
	We have for $x>0$,
\begin{align*}
&\left\| \begin{array}{cc} 
\frac{\partial}{\partial \um} G_{\um}(\um, \ug, x) & \frac{\partial}{\partial \ug} G_{\um}(\um,\ug, x) \\
\frac{\partial}{\partial \um} G_{\ug}(\um, \ug, x) & \frac{\partial}{\partial \ug} G_{\ug}(\um, \ug, x)
\end{array} \right\| \\
> &\vert (x-\hat{B}_1 c)(\frac{1}{\ug^2}-\hat{A}_2 c)-c^2\hat{A}_1 \hat{B}_2\vert
\geq (x - B_1 c)( \frac{1}{\vert \ug \vert^2} - A_2 c) - c^2 A_1 B_2 \\
= &( \frac{1}{\vert g \vert^2} - A_2 c)(\vert x \vert - c B_1 - c^2 A_1 B_2 ( \frac{1}{\vert \ug \vert^2} - A_2 c)^{-1}) = 0.
\end{align*}
From the same argument given above, the ``$>$'' comes from the fact that the absolute values of the $\hat A_i$, $\hat B_i$ are strictly less than their real counterparts, and ``$=$''  is due to \eqref{7228}.
The proof is complete.
\end{proof}

\section{The Support of  $ F $}\label{sec3}

In this section, we present results on the support of the limiting distribution $ F $. 
The support of a distribution function $ F $ is the set of all points $ x $ satisfying 
$ F(x + \varepsilon) - F(x - \varepsilon) > 0  $ for all $ \varepsilon> 0. $
Let $ S_F $ and $ S_H $ denote the support of $ F $ and $ H $, respectively. 
Clearly, by the definition of $F$ and $H$, we have $ S_F \subset [0, \infty)  $ and  $ S_H \subset [0, \infty)\times (0, \infty) $. 

We will concentrate our investigation of the support, $S_F$ of $F$ by the support, $S_{\uF}$ of $\uF$. The latter can be found by determining those values of the real line which are in its complement,  $S_{\uF}^c$.
Corresponding to \cite{dozier2007analysis}, we begin our analysis of the support of the LSD $ F $ with the following result.
\begin{theorem}\label{th714}
	When $ c \leq 1 $, and assuming the conditions imposed on H in Theorem \ref{th77}, the LSD $ F $ determined by \eqref{second} has no mass at zero.
\end{theorem}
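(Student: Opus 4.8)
The plan is to read off $F(\{0\})$ from the behaviour of the Stieltjes transform $\um$ along the imaginary axis as $z=iv\downarrow 0$, combining the first equation of \eqref{third} with the positivity of $\Im\ug(iv)$ recorded in the derivation of \eqref{eqa0}. First I would note the measure-level content of \eqref{vartrans}: since, for $c\le 1$, $-(1-c)/z$ is the Stieltjes transform of the (sub-probability) measure $(1-c)\delta_0$, the identity $\um=-(1-c)/z+cm$ forces $\uF=(1-c)I_{[0,\infty)}+cF$, a probability measure of total mass $1$, and hence $\uF(\{0\})=(1-c)+cF(\{0\})$. Because $c\le 1$, the solution of \eqref{second} in $\mathbb C^+\times\mathbb C^+$ is unique for each $z\in\mathbb C^+$, so the corresponding pair $(\um,\ug)$ solving \eqref{third} is unique and $\um(z)=m_{\uF}(z)$ there; in particular $\Im\um(iv)>0$. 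From \eqref{eqa0} and the identity $\ug_2=V_0^{-1}(cB_2\um_2+v)$ displayed just below it one also gets $\Im\ug(iv)>0$ for every $v>0$. Finally, $\lambda_{-1}^2=\int t^{-1}\,\mathrm dH<\infty$ forces $H(\{t=0\})=0$, so $t>0$ for $H$-a.e.\ $(s,t)$, and $S_H\subset[0,\infty)^2$.

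Next I would argue by contradiction: assume $\alpha:=F(\{0\})>0$ and set $\beta:=\uF(\{0\})=(1-c)+c\alpha\ge c\alpha>0$. On the one hand, writing $m_{\uF}(iv)=\int(\lambda-iv)^{-1}\,\mathrm d\uF(\lambda)$ and using that $\uF$ is a probability measure, dominated convergence gives
\[
iv\,\um(iv)=\int\frac{\lambda}{\lambda-iv}\,\mathrm d\uF(\lambda)-1\ \longrightarrow\ -\beta\qquad(v\downarrow 0),
\]
and also $\Im\um(iv)=\int v\,(\lambda^2+v^2)^{-1}\,\mathrm d\uF(\lambda)\ge\beta/v$. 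On the other hand, multiplying the first equation of \eqref{third} by $\um$ yields $z\um+(1-c)=-c\int(1+s\ug(z)+t\um(z))^{-1}\,\mathrm dH(s,t)$, and along $z=iv$, since $s\ge 0$, $\Im\ug(iv)>0$ and $t>0$ $H$-a.e., the denominator satisfies $|1+s\ug(iv)+t\um(iv)|\ge s\Im\ug(iv)+t\Im\um(iv)\ge t\Im\um(iv)$, whence
\[
\Bigl|\int\frac{\mathrm dH(s,t)}{1+s\ug(iv)+t\um(iv)}\Bigr|\le\frac{1}{\Im\um(iv)}\int\frac{\mathrm dH(s,t)}{t}=\frac{\lambda_{-1}^2}{\Im\um(iv)}\le\frac{v\,\lambda_{-1}^2}{\beta}\ \longrightarrow\ 0.
\]
Combining these two facts, $iv\,\um(iv)\to-(1-c)$; comparing with $iv\,\um(iv)\to-\beta$ forces $\beta=1-c$, i.e.\ $c\alpha=0$, contradicting $\alpha>0$ and $c>0$. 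Hence $F(\{0\})=0$.

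The one place where care is genuinely needed is the lower bound $|1+s\ug(iv)+t\um(iv)|\ge t\Im\um(iv)$; it hinges on $\Im\ug(iv)\ge 0$, and without that positivity one would first have to rule out cancellation between $s\ug$ and $t\um$ in the denominator before the integral could be shown to vanish. Fortunately that positivity is already contained in the derivation of \eqref{eqa0}, so I do not anticipate any serious obstruction. I would also remark, for context, that the hypothesis $c\le 1$ enters only through the uniqueness of the solution of \eqref{second} on $\mathbb C^+$ (which is what lets us identify $\um$ with $m_{\uF}$ and obtain $\uF=(1-c)I_{[0,\infty)}+cF$ as a genuine probability measure); this matches the fact that when $c>1$ the matrix $B_n$ does carry about $1-1/c$ of its mass at the origin.
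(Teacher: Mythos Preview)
Your proof is correct and follows essentially the same route as the paper: both evaluate $F(\{0\})$ via $\lim_{v\downarrow 0}(-iv\,m(iv))$, rewrite this using the first equation of \eqref{third} as (a constant times) the integral $\int(1+s\ug(iv)+t\um(iv))^{-1}\mathrm dH$, and then show that this integral tends to $0$ when $\uF(\{0\})>0$ by exploiting the positivity of $\Im\ug(iv)$, $\Im\um(iv)$ together with $\int t^{-1}\mathrm dH<\infty$. Your bound $|1+s\ug+t\um|\ge t\,\Im\um(iv)\ge t\beta/v$ is in fact a bit cleaner than the paper's version, which passes to the limit inside the denominator and appeals to $\ug$ being a genuine Stieltjes transform (argued in the Appendix), but the underlying idea is the same.
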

\begin{proof}[Proof of Theorem \ref{th714}]
	Using the formula $ G(\{0\}) = \lim_{v\downarrow 0}(-iv m_G(iv))$ for any probability measure $ G $, and by the relation between $m(z)$, $ \um(z) $ and \eqref{third}, we have
	\begin{align*}
	F(\{0\}) &= \lim_{v \downarrow 0}(-iv m(iv))\\
	&= \lim_{v \downarrow 0} \int \dfrac{\mathrm{d}H(s, t)}{1 + s\underline{g}(iv) + t\underline{m}(iv)} \\
	&= \lim_{v \downarrow 0} \int \dfrac{-iv\mathrm{d}H(s, t)}{-iv - siv\underline{g}(iv) - tiv\underline{m}(iv)}\\
	&= \lim_{v \downarrow 0} \int \dfrac{-iv\mathrm{d}H(s, t)}{-siv\underline{g}(iv) + t\underline{F}(\{0\})},
	\end{align*}
	where $ \underline{F}(\{0\}) = F(\{0\})\geq 0 $ if $c=1 $ and $ \underline{F}(\{0\}) \geq (1 - c) $ if $ c<1 $.   
	Therefore $ \lim(-iv\underline{g}(iv)) \geq 0 $.  
	Thus $ F(\{0\}) > 0 $ will lead to a contradiction because the left hand side would be positive while the right hand would necessarily be zero.	
\end{proof}

To identify points, $x$ in $S_{\uF}^c$, we only need to consider $x\ge0$. When $c\leq1$, according to the preceding theorem, $x>0$. Because if $(0,\epsilon)\subset S_{\uF}^c$, it necessarily implies that $\uF$ has a mass of $1-c$ at 0.

We next claim that the supports of $\uF$ and $P_{\ug}$ are identical, and $P_{\ug}$ is a probability measure whose the Stieltjes transform is $\ug(z)$. Notice that $x\in\mathbb R$ outside the support of a measure implies the Stieltjes transform of the measure is real at $x$. If $(x,\um,\ug)$, with $x$ real, is an extended solution, and if  $\um$ is real, then from the first equation in \eqref{third}, $\ug$ must also be real. If $\ug$ is real, then the second equation in \eqref{third} requires $\um$ to be real. This proves the claim. It will be useful to consider $S_{P_{\ug}}^c$ in the arguments below.

The following presents a scheme for computing solutions $z,\um,\ug$ of \eqref{third}. 
Equating the two equations in \eqref{third}, we get 
$$ -\frac{1-c}{\underline{m}} - \frac{c}{\underline{m}}\int \dfrac{\mathrm{d}H(s, t)}{1 + s\underline{g} + t\underline{m}} = -\frac{1}{\underline{g}} + c \int \dfrac{t\mathrm{d}H(s, t)}{1+s\underline{g}+t\underline{m}}.  $$
This is equivalent to
\begin{equation}\label{justumug} 
c\ug^2\int \dfrac{s \mathrm{d}H(s, t)}{1 + s\underline{g} + t\underline{m}} +{\underline{m}}-{\underline{g}}=0.  
\end{equation}
When  $(z,\um,\underline{g})$ is an extended solution to  \eqref{third}, we claim there is only one $\um$ for every $\ug$ solving this equation. %there is a unique $ \underline{m} $ satisfies the equation above. 
If not, suppose there are two different $ \underline{m}_1 $ and $ \underline{m}_2 $ satisfying the equation. Taking the difference between the two equations
we obtain,
\begin{eqnarray*}
	c(\underline{m}_2-\underline{m}_1)\underline{g}^2\int \dfrac{st \mathrm{d}H(s, t)}{(1 + s\underline{g} + t\underline{m}_1)(1 + s\underline{g} + t\underline{m}_2)} +\underline{m}_1-\underline{m}_2=0.
\end{eqnarray*} 
Consequently, we have 
\begin{eqnarray*}
	c\int \dfrac{st \mathrm{d}H(s, t)}{(1 + s\underline{g} + t\underline{m}_1)(1 + s\underline{g} + t\underline{m}_2)} =\frac{1}{\underline{g}^2}.
\end{eqnarray*} 
Therefore, by Cauchy-Schwarz inequality
\begin{eqnarray*}
	\frac{1}{\vert \underline{g}\vert^2}\le  \left(c\int \dfrac{st \mathrm{d}H(s, t)}{\vert 1 + s\underline{g} + t\underline{m}_1\vert^2}\cdot c\int \dfrac{st \mathrm{d}H(s, t)}{\vert 1 + s\underline{g} + t\underline{m}_2\vert^2}\right)^{1/2}<\frac{1}{\vert \underline{g}\vert^2}.
\end{eqnarray*}
Here, the last inequality follows from (\ref{eqa0}) (as mentioned above, true for real $x$). The contradiction proves our assertion.   

\begin{theorem}\label{th716}
	Assume $c\leq1$ and the conditions imposed on $H$ in Theorem \ref{th77} hold.  Let $x_0 \in S^c_{\underline{F}}\cap\mathbb R^+$,
	\begin{description}
		\item[(a)] Then $ \underline{m}(z)=\int (t-z)^{-1}\mathrm{d}\underline{F}(t) $ is analytic in a neighborhood $ D_{x_0} $ of $ x_0 $ and there exists a co-solution $ \underline{g}(z) $ which is also analytic in $ D_{x_0} $. 
		The triple $ (x, \underline{m}(x), \underline{g}(x)) $, $x\in D_{x_0}\cap\mathbb{R}^+$ is an extended solution to  %\eqref{712} and 
		\eqref{third} with $V < x$.
		\item[(b)] For any support point $ (s, t) $ of $ H $, $ s\underline{g}(x_0) + t\underline{m}(x_0)  \neq -1 $.
	\end{description}
	On the other hand, if $x_0,\um_0,\ug_0$, with $x_0>0$,  form a real extended solution to \eqref{third} satisfying (b), then from \eqref{justumug}, 
	there exists a real analytic function $x=x(\ug)$, defined in an interval containing $\ug_0$ which satisfy \eqref{third}, and if $x'(\ug_0) \neq0$, then $x_0\in S_{\uF}^c$.%Conversely, if (a) - (c) are satisfied, then $ x_0 \in S^c_{\underline{F}}\backslash \{0\} $.
\end{theorem}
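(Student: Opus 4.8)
The plan is to prove the three forward implications (a)--(b) first and then the converse; part (a) carries the main computation and part (b) is the delicate step.

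\emph{Part (a).} Since $x_0\notin S_{\uF}$, the Stieltjes transform $\um=m_{\uF}$ is analytic in a complex disc $D_{x_0}$ about $x_0$. By the claim established just before the theorem, $S_{\uF}=S_{P_{\ug}}$, and since (by the Appendix) $\ug$ is the Stieltjes transform of the probability measure $P_{\ug}$, $\ug$ is analytic on $D_{x_0}$ as well. The two equations in \eqref{third} hold on $D_{x_0}\cap\mathbb C^+$; both sides being analytic in $z$ on $D_{x_0}$, they hold throughout $D_{x_0}$, so for every real $x\in D_{x_0}\cap\mathbb R^+$ the triple $(x,\um(x),\ug(x))$ is an extended solution, and Theorem \ref{th77}(b) together with \eqref{eqa0} gives $V_0(x)>0$, $\ug(x)$ bounded away from $0$, and $A_j(x),B_j(x)$ bounded. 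To upgrade \eqref{Vlessthanorequalz} to $V<x$, differentiate \eqref{third} in $z$ and restrict to real $x$, where the hatted and unhatted integrals coincide; eliminating $\um'$ from the two differentiated equations, and using the partial derivatives and the determinant $J$ from the proof of Theorem \ref{th713}, one obtains the identities $\det J(x)=-V_0(x)\bigl(x-V(x)\bigr)$ and, from the second equation of \eqref{third} (written for real $x$ as $x=-\ug^{-1}+c(B_1+\ug A_2+\um B_2)$),
\[
\ug'(x)\,\det J(x)\;=\;cB_2(x)\um(x)+cB_1(x)-x\;=\;\ug(x)\,V_0(x).
\]
Since $\ug=m_{P_{\ug}}$ and $x_0\notin S_{P_{\ug}}$, the right-hand side $\ug(x_0)V_0(x_0)$ is nonzero while $\ug'(x_0)=\int(\lambda-x_0)^{-2}\,\mathrm dP_{\ug}(\lambda)$ is finite; hence $\det J(x_0)\neq0$, i.e.\ $V(x_0)\neq x_0$, which with \eqref{Vlessthanorequalz} yields $V(x_0)<x_0$, and the same holds for all $x$ in $D_{x_0}\cap\mathbb R^+$.

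\emph{Part (b).} Suppose to the contrary that $s_0\ug(x_0)+t_0\um(x_0)=-1$ for some $(s_0,t_0)\in S_H$; necessarily $t_0>0$, since $\lambda_{-1}\le K$ forbids $H$-mass at $t=0$. Write $a_x(s,t)=1+s\ug(x)+t\um(x)$; then $a_{x_0}(s_0,t_0)=0$, while the $x$-derivative of $a_x(s_0,t_0)$ at $x_0$ equals $s_0\ug'(x_0)+t_0\um'(x_0)>0$ because $\ug'(x_0)>0$, $\um'(x_0)=\int(\lambda-x_0)^{-2}\,\mathrm d\uF(\lambda)>0$, and $(s_0,t_0)\neq(0,0)$. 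Because $(s_0,t_0)\in S_H$, every neighbourhood of it carries positive $H$-mass, on which $|a_{x_0}(s,t)|$ is comparable to the distance to the affine critical line $\{1+s\ug(x_0)+t\um(x_0)=0\}$ and $t\ge t_0/2$; substituting this into $A_2(x_0)=\int st\,|a_{x_0}|^{-2}\,\mathrm dH$ and $B_j(x_0)=\int t^j\,|a_{x_0}|^{-2}\,\mathrm dH$ forces one of these integrals to diverge, contradicting the bounds from part (a)/Theorem \ref{th77}(b). The step requiring real care --- and the main obstacle of the theorem --- is to run this divergence argument so that it is not defeated by an $H$ that thins near the critical line; this is carried out as in the corresponding step of Dozier and Silverstein \cite{dozier2007analysis}, exploiting that $a_x(s_0,t_0)$ changes sign at $x_0$, so that a support point lying on the critical line would create a singularity in the functions $x\mapsto\int(1+s\ug(x)+t\um(x))^{-1}\,\mathrm dH$, which by \eqref{third} are analytic --- hence continuous --- at $x_0$.

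\emph{Converse.} Let $x_0>0$ and let $(x_0,\um_0,\ug_0)$ be a real extended solution satisfying (b). By (b) the denominators $1+s\ug+t\um$ stay bounded away from $0$ over the compact set $S_H$ for $(\um,\ug)$ in a complex neighbourhood of $(\um_0,\ug_0)$, so all integrals in \eqref{justumug} and \eqref{third} are analytic there; the $\um$-derivative of the left side of \eqref{justumug} at $(\um_0,\ug_0)$ equals $1-c\ug_0^2\hat A_2=\ug_0^2V_0\neq0$ by \eqref{eqa0}, so the implicit function theorem produces a real-analytic branch $\um=\um(\ug)$ near $\ug_0$ solving \eqref{justumug} with $\um(\ug_0)=\um_0$ --- the branch singled out in the discussion preceding the theorem. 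Put $x(\ug)=-\ug^{-1}+c\int t\,(1+s\ug+t\um(\ug))^{-1}\,\mathrm dH$, real-analytic with $x(\ug_0)=x_0$; since \eqref{justumug} is equivalent to the equality of the two right-hand sides of \eqref{third}, the triple $(x(\ug),\um(\ug),\ug)$ satisfies \eqref{third}. If $x'(\ug_0)\neq0$ then $x(\cdot)$ is locally biholomorphic, so $\ug=\ug(z)$ and $\um(z)=\um(\ug(z))$ extend to analytic functions on a complex neighbourhood of $x_0$ that solve \eqref{third}; by the uniqueness in Theorem \ref{th77}(d) they agree with $\ug$ and $m_{\uF}$ on $\mathbb C^+$. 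Hence $m_{\uF}$ continues analytically across $x_0$ with $\Im m_{\uF}\equiv0$ on a real neighbourhood of $x_0$, so by Stieltjes inversion $\uF$ has no mass near $x_0$ (and $\uF(\{x_0\})=\lim_{v\downarrow0}(-iv\,m_{\uF}(x_0+iv))=0$ by analyticity), i.e.\ $x_0\in S_{\uF}^c$.
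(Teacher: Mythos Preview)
Your treatment of part (a) and of the converse follows the paper's line and is correct; the identity $\ug'(x)\det J(x)=\ug(x)V_0(x)$ is in fact a tidier route to $V<x$ than the paper's own, which computes $\um'$ via \eqref{7330} and leaves the conclusion ($\um'$ finite forces $x\neq V$, hence $V<x$ by \eqref{Vlessthanorequalz}) implicit.

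The genuine gap is in part (b). Your first attempt---that positive $H$-mass near $(s_0,t_0)$ forces $A_2(x_0)$ or $B_j(x_0)$ to diverge---fails for exactly the reason you name: $H$ may thin toward the critical line fast enough to keep those integrals finite. Your fallback sketch then mis-identifies the mechanism. The function $x\mapsto\int(1+s\ug(x)+t\um(x))^{-1}\mathrm dH$ equals $-(x\,\um(x)+1-c)/c$ by the first equation of \eqref{third}, so it is analytic at $x_0$ whether or not a support point sits on the critical line; the sign change of $a_x(s_0,t_0)$ alone produces no singularity there. What is actually needed---and what the paper supplies as a separate Lemma~\ref{le717}---is a two-dimensional nested-square argument: one builds dyadic squares $R_n\ni(s_0,t_0)$ with $H(R_n)\ge\alpha 4^{-n}$, passes to the limit $(s^*,t^*)$ of their centres, uses the positivity of $\ug',\um'$ to locate some $x^*$ in the interval about $x_0$ with $1+s^*\ug(x^*)+t^*\um(x^*)=0$, and then compares $H(R_n)$ against $\sup_{R_n}|1+s\ug(x^*)+t\um(x^*)|\le M2^{-n}$ to force $\int|1+s\ug(x^*)+t\um(x^*)|^{-2}\mathrm dH=\infty$. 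This makes $A_1$ or $B_1$ infinite at the extended solution $x^*$, contradicting Theorem~\ref{th77}(b). The point is that the divergence occurs at a carefully chosen nearby $x^*$ and in the \emph{squared} integral; neither your direct estimate at $x_0$ nor the first-power integral sees it. The one-dimensional lemma in \cite{dozier2007analysis} has the same flavour, but the two-dimensional version (a line of zeros rather than a point) needs its own proof, which you have not supplied. A minor side issue: your claim ``necessarily $t_0>0$'' is unjustified---$\int t^{-1}\mathrm dH<\infty$ gives $H(\{t=0\})=0$ but does not rule out boundary support points with $t_0=0$.
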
 
\begin{proof}[Proof of Theorem \ref{th716}]
	When $x_0$ is outside the support of a probability distribution, its Stieltjes transform exists at $x_0$, is analytic in a real neighborhood of $x_0$, and 
	has a positive derivative at each point in this neighborhood.   So for $x_0 \in S^c_{\underline{F}}\cap\mathbb R^+$,  there is a constant $ \varepsilon \in  (0, x_0) $ such that $ U_{x_0} = (x_0-\varepsilon, x_0 + \varepsilon) \subset S^c_{\underline{F}}  $. Let $ D_{x_0} $ denote the ball in the complex plane centered at $ x_0 $ with radius $ \varepsilon $. 
	Then, for any $ z\in D_{x_0} $, $ \tilde{\um}(z) = \int (t - z)^{-1}\mathrm{d}\underline{F}(t) $ is well defined and analytic.
	
	Then, $ \underline{m}(x + iv) =\int (t - (x + iv))^{-1}\mathrm{d}\underline{F}(t) $ and
	$ \tilde{\um}(z) $ are identical.
	Letting $ v \to 0 $, i.e. $ z = x + iv \xrightarrow{\mathbb{C}^+} x $, and applying Theorem \ref{th713}, we find that $ \underline{m}(x) $, $ \underline{g}(x) $ exist and satisfy \eqref{third}.
	By the unique extension theorem of analytic functions, $ \underline{m}(x) = \tilde{\um}(x) $ for all $ x \in U_{x_0} $. In the following, we will identify $ \tilde{\um} $ with $ \underline{m} $.
	
	The same argument applies for $P_{\ug}$, so $\ug(z)$ is also analytic in a neighborhood of $x_0$.  Let $D_0$ and $U_0$ be the smaller of
	the two respective sets associated with $\um$ and $\ug$.
	Suppose there is a support point $ (s, t) $ of $ H(s, t) $ such that 
	$ s\underline{g}(x_0)+t\underline{m}(x_0)+1 = 0 $. 
	Taking the derivative of the first equation of \eqref{third}, we get
	$$ \frac{\mathrm{d}(zm)}{\mathrm{d}z}=c\int \dfrac{s\ug'+t\um'}{(1+ s\ug + t\um)^2}\mathrm{d}H(s,t). $$ 
	Lemma \ref{le717}, which will be given later, suggests there is a point $ x^* $ arbitrarily close to $ x_0 $ such that
	$ \int (1 + s\underline{g}(x^*) + t\underline{m}(x^*))^{-2} \mathrm{d}H(s, t) = \infty $. This contradicts $ \underline{m}(x) $ being analytic in a neighborhood of $x_0$. Thus, condition (b) holds.
	
	Since $ \underline{m}(x) $ for $ x\in U_{x_0} $ is real with $ \underline{m}'(x)>0 $, we can apply the inverse function theorem that there exists a unique analytic function $ z(\underline{m}) $ defined on some neighborhood $ D_{\um_0}$ in the $ \underline{m} $ plane. 
	This function satisfies $ x = z(\underline{m}(x)) $, where $x\in U_{x_0}$. 
	Furthermore, as $ z'(\underline{m}(x)) = 1/\underline{m}'(x) $, we can deduce that 
	$ z'(\underline{m}(x)) > 0 $.
	We can compute the derivative of $ \underline{m}(x) $ as follows. Recalling the definitions of $G_{\um}$ and $G_{\ug}$, we have 
	$$ \frac{\partial}{\partial z}G_{\um}+{\um}'\frac{\partial}{\partial \um}G_{\um}+ {\ug}'\frac{\partial}{\partial{\ug}} G_{\um} = \um + \um'x - \um'c B_1 - c\ug' A_1 = 0, $$
	$$ \frac{\partial}{\partial z}G_{\ug}+\um'\frac{\partial}{\partial{\um}}G_{\ug}+ {\ug}'\frac{\partial}{\partial{\ug}} G_{\ug} = 1 +  \um'c B_2 -\frac{\ug'}{{\ug}^2}+ c {\ug}' A_2 = 0.  $$
	From the second equation, we get $$  \ug' = (1 + {\um}'c B_2)/({\ug}^{-2} - c A_2),  $$
	and then substituting it to the first equation
	\begin{align}\label{7330}
	\begin{split}
	{\um}'= [(-{\um}+c A_1)/({\ug}^{-2} - c A_2)](x - V )^{-1}.
	\end{split}
	\end{align}
	Therefore
	\begin{align}\label{zprime}
	\begin{split}
	z'=z'(\um)= (x- V)[(-\um +c A_1)/({\ug}^{-2} - cA_2)]^{-1}.
	\end{split}
	\end{align}
	
	Conversely, suppose $x_0$, $\um_0$ and $\ug_0$ with $x_0\neq0$, form a real extended solution to \eqref{third} satisfying (b).   
	Since $ \inf\{\vert s\ug_0 + t\um_0 + 1\vert : (s, t) \in S_H\} > 0 $ (since $ S_H $ is a closed set), the integrals in \eqref{third} as well as the integral in \eqref{justumug} are analytic functions of $ \um $ and $\ug$ in a neighborhood of $(\um_0,\ug_0)$.
	Notice that the partial derivative of \eqref{justumug} with respect to $\um$ is $$-c\ug^2A_2+1=\ug^2V_0>0.$$ 
	Therefore, by the implicit function theorem, $\um=\um(\ug)$ is uniquely defined and is analytic in a neighborhood of $\ug_0$ and with either of the two equations in \eqref{third}, we determine $z$.   
	So we see that in a neighborhood of $\ug_0$, $z=z(\ug)$ is analytic in a neighborhood of $\ug_0$.
	which are solutions to \eqref{third}.  
	When $z\in\mathbb{C}^+$, because of uniqueness we have $m_{\uF}(z)=\um(\ug)\in\mathbb{C}^+$ and $m_{\ug}(z)=\ug\in\mathbb{C}^+$.  
	We also see that \eqref{justumug} yields real $\um$ for each real $\ug$, and so $x=x(\ug)$ is a real-valued function for $\ug$ lying in an interval containing $\ug_0$.
	
	If $x'(\ug_0)\neq0$, the inverse function theorem ensures the existence of an analytic function $\ug=\ug(z)$ for $z\in\mathbb C$ within a neighborhood of $x_0$. This function serves as the inverse of $z(\ug)$, such that $z$, $\um(\ug)$, and $\ug$ provide a solution to \eqref{third}.
	Since $(x_0,\um_0,\ug_0)$ is an extended solution, there exists a sequence $z_n\in\mathbb{C}^+\to x_0$ and $m_{\uF}(z_n),m_{\ug}(z_n)\to \um_0,\ug_0$.   
	Therefore $\ug(z)$ is the analytic extension of $m_{\ug}$ onto a real interval that includes $x_0$. And $\ug(z)$is real-valued when $z$ is real, which implies that the density of $P_{\ug}$ exists and is zero in an interval containing $x_0$, so that necessarily $x_0$ is outside the support of $P_{\ug}$ and $\uF$.  
	Since $x_0$ is outside the support of $P_{\ug}$, $m_{\ug}(z_0)$ and $x'(\ug_0)$ are necessarily positive.
\end{proof}

\begin{lemma}\label{le717}
	Suppose $ H(s, t) $ is a measure supported by a closed subset of the first quadrant. 
	If there is a support point $ (s_0, t_0) \ne(0,0)$ such that $ g_0 s_0 + m_0 t_0 + c = 0, $
	where $c>0$ is a constant,
	$ g_0 = g(w_0) $ and $ m_0 = m(w_0) $ are functions defined on the interval 
	$ U = (w_0 -\eta, w_0 + \eta) $ whose derivative are not less than $ k \in (0, m) $. 
	Then there exists a $ w^*\in U $ such that 
	$ \int \vert sg(w^*) + tm(w^*) + c\vert^{-2}\mathrm{d}H(s, t) = \infty $.
\end{lemma}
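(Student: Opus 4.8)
The plan is to localise the analysis near the point $(s_0,t_0)$ and then reduce the assertion to a purely measure-theoretic fact about finite measures on the line. Fix $\delta\in(0,\eta)$ and set $\phi(w,s,t)=sg(w)+tm(w)+c$, so that $\phi(w_0,s_0,t_0)=0$ and the integrand of interest is $|\phi(w,s,t)|^{-2}$. Since $g'\ge k>0$, $m'\ge k>0$ on $U$ and the support of $H$ lies in the first quadrant, $\partial_w\phi(w,s,t)=sg'(w)+tm'(w)\ge k(s+t)$; as $(s_0,t_0)\neq(0,0)$ we have $s_0+t_0>0$, so there are $\rho_0>0$ and $\kappa>0$ with $\partial_w\phi(w,s,t)\ge\kappa$ for all $w\in U$ and all $(s,t)$ in the ball $B_\rho=B_\rho(s_0,t_0)$, $\rho\le\rho_0$. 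Because $g$ and $m$ are continuously differentiable (automatic in the intended application, where they are real-analytic), I may also assume $\partial_w\phi\le C<\infty$ on $[w_0-\delta,w_0+\delta]\times B_{\rho_0}$. Finally, $(s_0,t_0)$ being a support point of $H$, we have $H(B_\rho)>0$ for every $\rho>0$.

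The first step is, for each $(s,t)\in B_\rho$ with $\rho$ small, to produce the unique zero of $w\mapsto\phi(w,s,t)$. From $|\phi(w_0,s,t)-\phi(w_0,s_0,t_0)|=|(s-s_0)g_0+(t-t_0)m_0|\le C_0\rho$ with $C_0=(g_0^2+m_0^2)^{1/2}$, choosing $\rho<\kappa\delta/(2C_0)$ forces the strictly increasing map $\phi(\cdot,s,t)$ to change sign on $(w_0-\delta,w_0+\delta)$; let $w_{s,t}$ be its unique zero there, so that $|w_{s,t}-w_0|\le C_0\rho/\kappa<\delta/2$, and $(s,t)\mapsto w_{s,t}$ is continuous, hence Borel. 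By the mean value theorem, $|\phi(w,s,t)|\le C|w-w_{s,t}|$ for $w\in[w_0-\delta,w_0+\delta]$, so for each such $w$
\begin{align*}
\int|sg(w)+tm(w)+c|^{-2}\,\mathrm dH(s,t)\ \ge\ C^{-2}\!\int_{B_\rho}|w-w_{s,t}|^{-2}\,\mathrm dH(s,t)\ =\ C^{-2}\!\int|w-\tau|^{-2}\,\mathrm d\mu(\tau),
\end{align*}
where $\mu$ is the push-forward of $H(\,\cdot\cap B_\rho)$ under $(s,t)\mapsto w_{s,t}$, a nonzero finite Borel measure carried by $(w_0-\delta/2,w_0+\delta/2)\subset U$. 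It therefore suffices to exhibit $w^*$ in this interval with $\int|w^*-\tau|^{-2}\,\mathrm d\mu(\tau)=\infty$.

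This last reduction is the crux, and I expect it to be the main obstacle. A Fubini computation gives only $\int_{w_0-\delta}^{w_0+\delta}\big(\int|w-\tau|^{-2}\mathrm d\mu(\tau)\big)\mathrm dw=\infty$ — each slice $\int_{w_0-\delta}^{w_0+\delta}|w-w_{s,t}|^{-2}\mathrm dw$ is infinite because $w_{s,t}$ is interior, and $H(B_\rho)>0$ — but this does not on its own single out a point where the inner integral is infinite. The resolution rests on the following fact: for any finite nonzero Borel measure $\mu$ on $\mathbb R$ there is a $w^*$ with $\int|w^*-\tau|^{-2}\,\mathrm d\mu(\tau)=\infty$ (intuitively, $\mu$ is at most one-dimensional, hence too concentrated for this potential to be finite everywhere). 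Indeed, if $\mu$ has an atom at $w^*$ the term $\tau=w^*$ already forces the integral to be $\infty$; otherwise, by the differentiation theory of measures on $\mathbb R$ the upper density $\overline D\mu(w)=\limsup_{r\downarrow0}\mu((w-r,w+r))/(2r)$ is strictly positive for $\mu$-a.e.\ $w$, and at any non-atom $w^*$ with $\overline D\mu(w^*)\ge\varepsilon>0$ there are $r_j\downarrow0$ with $\mu((w^*-r_j,w^*+r_j))\ge\varepsilon r_j$, whence
\begin{align*}
\int|w^*-\tau|^{-2}\,\mathrm d\mu(\tau)\ \ge\ r_j^{-2}\,\mu\big((w^*-r_j,\,w^*+r_j)\big)\ \ge\ \varepsilon/r_j\ \longrightarrow\ \infty.
\end{align*}
Combined with the displayed lower bound, this yields $\int|sg(w^*)+tm(w^*)+c|^{-2}\mathrm dH(s,t)=\infty$ with $w^*\in U$, as required. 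The remaining points — the uniform lower and upper bounds on $\partial_w\phi$ near $w_0$, and the measurability of $(s,t)\mapsto w_{s,t}$ — are routine, and in the use of this lemma inside the proof of Theorem \ref{th716} they are immediate from the real-analyticity of $\underline m$ and $\underline g$.
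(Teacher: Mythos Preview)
Your proof is correct and takes a genuinely different route from the paper's. The paper works entirely in the $(s,t)$-plane: it builds a nested sequence of dyadic squares $R_n$ around $(s_0,t_0)$ with $H(R_n)\ge 4^{-n}\alpha$, lets $(s^*,t^*)$ be the limit point, finds $w^*\in U$ with $s^*g(w^*)+t^*m(w^*)+c=0$ via the sign-change argument, and then bounds $|sg(w^*)+tm(w^*)+c|=|(s-s^*)g(w^*)+(t-t^*)m(w^*)|\le M\,2^{-n}$ for $(s,t)\in R_n$ purely from the size of the square and the finiteness of $g(w^*),m(w^*)$; the divergence follows by summing over the annuli $R_{n-1}\setminus R_n$ (with a small case split). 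Your approach instead linearises in $w$: the map $(s,t)\mapsto w_{s,t}$ pushes the problem down to the one-dimensional fact that $\int|w-\tau|^{-2}\,d\mu(\tau)=\infty$ at some support point of any nonzero finite Borel measure $\mu$ on $\mathbb R$, which you settle with a density argument. This is a clean conceptual reduction and reusable in other contexts.

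The trade-off is that your argument needs an extra hypothesis the lemma does not state. To obtain $|\phi(w^*,s,t)|\le C|w^*-w_{s,t}|$ you use an \emph{upper} bound on $\partial_w\phi=sg'+tm'$, i.e., local boundedness of $g',m'$; the lemma only gives $g',m'\ge k$. The paper's proof never compares two $w$-values and so avoids derivatives of $g,m$ in the key estimate altogether (only the values $g(w^*),m(w^*)$ enter). You flag this yourself, and in the application inside Theorem~\ref{th716} the functions are real-analytic, so nothing is lost there; but as a proof of Lemma~\ref{le717} in the generality stated, this is a small gap that the paper's direct approach does not share.
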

\begin{proof}[Proof of \ref{le717}]
	By assumptions, for any support point $ (s,t) $  such that $ s+t > \phi > 0 $, where $\phi$ is the distance of the support of $H$ to the origin, we have
	\begin{equation}\label{7332}
	\int_{0}^{\eta} (sg'(w) + tm'(w))\mathrm{d}w \geq k \phi \eta > 0,
	\end{equation}
	and similiarly
	$$ \int_{0}^{-\eta} (sg'(w) + tm'(w))\mathrm{d}w \leq - k \phi \eta < 0. $$
	Without loss of generality, we may assume that $ c = 1 $. % and $ \eta >0 $.
	Since $ (s_0, t_0) $ is a support point of $ H $, which is not the origin, let the constant $ d > 0 $ be such that
	$ d < k \phi \eta / \sqrt{2(g^2_0 + m^2_0) }$, and define $ R_0 $ to be a square contained in the first quadrant, containing the point $ (s_0, t_0) $, the edge length $ d > 0 $ and does not cover the origin. There is a positive constant $ \alpha > 0 $ such that $ H(R_0) = \alpha $.
	
	\begin{remark}\label{re718}
		If $ (s_0, t_0) $ is on one coordinate axis, we can take $ (s_0, t_0) $ as an inner point of one edge of $ R_0 $. If $ (s_0, t_0) $ is an inner point of the first quadrant, then we can take 
		$ (s_0, t_0) $ as an inner point of $R_0$. In either way, we may guarantee $ H(R_0) > 0 $.
	\end{remark}
	Split the square into four squares by equally dividing each edge into two. Denote one with $ H $ measure no less than $ \alpha/4 $ by $ R_1 $. 
	Inductively, split $ R_n $ similarly into four small squares and denote the one with $ H $ measure no less than $ \alpha4^{-n-1} $ by $ R_{n+1} $. 
	Note that for any $ n $, $ H(R_n) \geq 4^{-n}\alpha $. Write the center of $ R_n $ as $ (s_n, t_n) $ for all $ n \geq 1 $. By the nested interval theorem, $ (s_n, t_n) $ tends to a limit 
	$ (s^*, t^*)\in R_n, $ for all $ n $. Also, by the construction of the squares, we know that 
	$\|(s_{n+1}- s_n, t_{n+1} - t_n)\| =\sqrt2 2^{-n-1} d. $
	
	Write $ Q_n(w) = s_n g(w) + t_n m(w) + 1$,
	\begin{align*}
	%\begin{split}
	Q_n(w) =& Q_n(w_0) +\int_{w_0}^{w} Q_n'(w) \mathrm{d}w \\
	=&((s_n - s_0)g_0 + (t_n - t_0) m_0 + \int_{w_0}^{w} Q_n'(w) \mathrm{d}w)\\
	&\left\{
	\begin{array}{rcl}
	&\geq& -d \sqrt{2(g^2_0 + m^2_0) } + k \phi \eta >0  \mbox{ when } w=w_0+\eta, \\
	&\leq& d \sqrt{2(g^2_0 + m^2_0) } - k \phi \eta <0  \mbox{ when } w=w_0-\eta.
	\end{array} \right.
	%\end{split}
	\end{align*} 
	Therefore, we may select $ w_n\in U $ such that $ s_n g(w_n) + t_n m(w_n) + 1 = 0 $.
	Similarly, we may select $ w^* \in U$%(w_0 -\eta, w_0 +\eta) $
	such that $ u^* g(w^*) + t^* s(w^*) + 1 = 0 $.
	
	For each $ (s, t) \in R_n, $ $ \|(s - s^*, t - t^*) \| \leq \sqrt{2} 2^{-n} d$. Thus, we have
	$$  \vert s g^* + t m^* + 1\vert = \vert(s- s^*)g^* +(t - t^*)m^* \vert \leq \sqrt{2} 2^{-n} d \sqrt{(g^*)^2+(m^*)^2}:=M2^{-n}. $$
	If there are infinitely many $n$ such that $H(R_n)\ge H(R_{n-1})/2$ and suppose $n_k$ is the $k$th such $n$, then as $k\to\infty$,
	\begin{equation*}
	\begin{split}
	\int (s g^*+tm^*+1)^{-2}\mathrm{d}H(s,t)&\geq \int_{R_{n_k}}(sg^*+tm^*+1)^{-2}\mathrm{d}H(u, t)\\
	&\geq  H( R_{n_k}) M^{2} 2^{2n_k} \\
	&\geq 2^k4^{-n_k}\alpha M^2 2^{2n_k}\\
	&\geq M^2  \alpha 2^{k}\to \infty.
	\end{split}
	\end{equation*} 	
	Otherwise, there is an $N_0$ such that for all $n\ge N_0$, $H(R_{n-1}-R_n)\ge H(R_n)$. Therefore, 
	\begin{equation*}
	\begin{split}
	\int (s g^*+tm^*+1)^{-2}\mathrm{d}H(s,t)&\geq\sum_{n=1}^{\infty}\int_{R_{n-1}-R_n}(sg^*+tm^*+1)^{-2}\mathrm{d}H(s, t)\\
	&\geq \sum_{n=1}^{\infty} H(R_{n-1} - R_n) M^{2} 2^{2n} \\
	&\geq M^2 \sum_{n=N_0+1}^{\infty}H(R_n) 2^{2n}\\
	&\geq M^2 \sum_{n=N_0+1}^{\infty}4^{-n} \alpha 2^{2n}=\infty.
	\end{split}
	\end{equation*} 
	The proof is complete.
\end{proof}

\section{Appendix \label{app}}

In this section we do not assume any restrictions on $H$, and $c$ imposed in Theorem \ref{th77}.

We begin with establishing some results on sequences of probability distribution functions.   
It centers on the L\'evy distance $L(F,G)$
between two distribution functions defined as
$$L(F,G)=\inf\{\delta:G(x-\delta)-\delta\leq F(x)\leq G(x+\delta)+\delta \quad\text{for all x}\}.$$
It is a metric on the set of all distribution functions yielding weak convergence:  $G_n$ converging weakly to $G$ if and only if $d(G_n,G)\to0$.
It follows that
$$L(F,G)\leq\|F-G\|,$$
where $\|\cdot\|$ is the sup norm on functions.  Also, from Corollary A.42 and Theorem A.44 of  \cite{bai2010spectral} we have, if $A$ and $B$ are both $n\times N$, then
\begin{equation}\label{Levymatrix}
L^4(F^{AA^*},F^{BB^*})\leq\frac2{n^2}(\mathrm{tr}(AA^*+BB^*))(\mathrm{tr}[(A-B)(A-B)^*]),
\end{equation}
and
\begin{equation}\label{rankinequality}
\|F^{AA^*}-F^{BB^*}\|\leq\frac1n\rank(A-B).
\end{equation}
Since the rank of a matrix $A$ is equal to the dimension of its row space, we have
\begin{equation}\label{ranknumber}
\rank(A)\leq\text{ the number of nonzero entries of $A$}.
\end{equation}
The following rank inequalities are well-known:
For matrices $A$ and $B$ of the same dimensions
\begin{equation}\label{ranksum}
\rank(A+B)\leq\rank(A)+\rank(B).
\end{equation}
For matrices $A$ and $B$, where the product $AB$ is defined, we have
\begin{equation}\label{rankprod}
\rank(AB)\leq\min(\rank(A),\rank(B)).
\end{equation}
We will also need to consider the fact that, for Hermitian matrices $A$ and $B$, with $B$ being nonnegative definite,
\begin{equation}\label{traceprod}
\vert \mathrm{tr} AB\vert \leq\|A\|\mathrm{tr} B,
\end{equation}
where $\|\cdot\|$ is the spectral norm.

The following extends Lemmas 4.7 and 4.8 of \cite{bai2010spectral}.

\begin{lemma}\label{tight}
	Let $\{F_n\}$ be a sequence of distribution functions, and for each $\epsilon>0$ there exists a tight sequence of distribution
	functions $\{F_{n,\epsilon}\}$ such that for each $\delta>0$ there exists $\epsilon=\epsilon(\delta)$  for which
	\begin{equation}\label{levylimsup}
	\limsup_nL(F_{n,\epsilon},F_n)<\delta.
	\end{equation}
	Then the sequence $\{F_n\}$  is tight.  Moreover, if for each $\epsilon$, $F_{n,\epsilon}$ converges weakly to $F_{\epsilon}$, then $F_n$ converges
	weakly to $F$ with $F_{\epsilon}=F_{\epsilon(\delta)}$ converging weakly to $F$ as $\delta\to0$.
\end{lemma}	
\begin{proof}[Proof of Lemma \ref{tight}]
	 For any $\delta$, we have $L(F_{n,\epsilon(\delta/2)}, F_n)<\delta/2$ for all $n$ large.  
	 Choose $x$ so that $F_{n,\epsilon(\delta/2)}(x) > 1- \delta/2$ for all $n$.  
	Then, for all $n$ large
	$$1-\delta/2<F_{n,\epsilon(\delta/2)}(x)\leq F_n(x+\delta/2)+\delta/2.$$
	Therefore, $1-\delta< F_n(x+\delta/2)$ and since $\delta$ is arbitrary, we see that $\{F_n\}$ is tight.
	
	If $F_{n,\epsilon}$ converges weakly to $F_{\epsilon}$, suppose $F_1$ and $F_2$ are two distributions, each being weak limits of $F_n$ along two
	different subsequences $\{n'\}$, $\{n''\}.$  
	Then, for any $\epsilon$,
	\begin{align*}
	L(F_1,F_2)=&\lim_{n',n"\to\infty}L(F_{n'},F_{n''}) 
	\le \limsup_{n'\to\infty} L(F_{n'},F_{n',\epsilon})\\& + 
	\limsup_{n''\to\infty}L(F_{n''},F_{n'',\epsilon})+\lim_{n'\to\infty}L(F_{\epsilon},F_{n',\epsilon})+\limsup_{n''\to\infty}L(F_{\epsilon},F_{n'',\epsilon})\\
	=&\limsup_{n'\to\infty} L(F_{n'},F_{n',\epsilon})+\limsup_{n''\to\infty}L(F_{n''},F_{n'',\epsilon}),
	\end{align*}
	so it is evident that $L(F_1,F_2)$ can be made arbitrarily small.  This implies the weak convergence of $F_n$ to some distribution function $F$.
	Finally, we observe that
	\begin{align*}
		L(F,F_{\epsilon})=&\lim_{n\to\infty}L(F_n,F_{\epsilon})\leq\lim_{n\to\infty}L(F_n,F_{n,\epsilon})
		+\lim_{n\to\infty}L(F_{n,\epsilon},F_{\epsilon})\\
		=&\lim_{n\to\infty}L(F_{n},F_{n,\epsilon}),
	\end{align*}
	which can be made arbitrarily small. The proof of this lemma completes.
\end{proof}

Then we begin by truncating $R_n$ and $T_n$ to matrices of bounded norm for all $n$.
Since $(1/N)R_nR_n^*$ and $T_n$ commute, there exists unitary $\bbU_n$ which simultaneously diagonalizes these two matrices: 
\begin{align*}
	(1/N)R_nR_n^*= \bbU_n{\rm diag}(s_1,\ldots,s_n)\bbU_n^*,\quad T_n=\bbU_n{\rm diag}(t_1,\ldots,t_n)\bbU_n^*.
\end{align*} 
Write $R_n=\sqrt N\bbU_n{\rm diag}(s_1^{1/2},\ldots,s_n^{1/2})\bbV_n$, where $\bbV_n$ is $N\times N$ unitary, and %\hfil\newline 
${\rm diag}(s_1^{1/2},\ldots,s_n^{1/2})$ is $n\times N$ diagonal.
Since $H_n$ converges weakly to  $H$, for any $\varepsilon>0$, there exists a constant $\tau=\tau_{\epsilon}>0$ such that the number of eigenvalues of $(1/N)R_nR_n^*$ 
larger than 
$\tau$  and the number eigenvalues of $\bbT_n$ larger than $\tau$ are both less than $n\varepsilon/6$. Let
$$R_{n,\varepsilon}=\sqrt N\bbU_n{\rm diag}((\min(s_1,\tau))^{1/2},\ldots,(\min(s_n,\tau))^{1/2})\bbV_n,$$ 
$$T^{1/2}_{n,\varepsilon}=\bbU_n{\rm diag}((\min(t_1,\tau))^{1/2},\ldots,(\min(t_n,\tau))^{1/2})\bbU_n^*,$$ 
and define
$$C_{n,\varepsilon}=(1/N)T_{n,\varepsilon}^{1/2}(R_{n,\varepsilon}+X_n)(R_{n,\varepsilon}+X_n)^*T_{n,\varepsilon}^{1/2},$$
$$B_{n,\varepsilon}=(1/N)(R_{n,\varepsilon}+T_{n,\varepsilon}^{1/2}X_n)(R_{n,\varepsilon}+T_{n,\varepsilon}^{1/2}X_n)^*.$$
Then, using \eqref{rankinequality}, \eqref{ranksum}, and \eqref{rankprod}, we have
\begin{align}\label{rankbound}
&\max(\| F^{C_n}-F^{C_{n,\varepsilon}}\|,\| F^{B_n}-F^{B_{n,\varepsilon}}\|) \\ \nonumber
\le& \frac1n(\rank(R_n-R_{n,\varepsilon})+\rank(T_n-T_{n,\varepsilon}))\le \varepsilon/3 .
\end{align}

We turn now to truncating and centralizing the entries of $X_n$.   Since the Lindeberg condition hold for any $\eta>0$, we can find a sequence $\eta_n\to 0$ for which $\eta_n\sqrt n\to\infty$ and
\begin{align}\label{lindebergtrunc}
\frac1{nN\eta_n^2}\sum_{i=1}^n\sum_{j=1}^N\bbE\vert x_{ij}^2\vert I(\vert x_{ij}\vert>\eta_n\sqrt{n})\to 0.
\end{align}
Let $\hat x_{ij}=x_{ij}I(\vert x_{ij}\vert <\eta_n\sqrt{n})$ and construct $\widehat C_{n,\varepsilon}$,  $\widehat B_{n,\varepsilon}$ similarly as $C_{n,\varepsilon}$, $B_{n,\varepsilon}$ with $x_{ij}$ replaced by $\hat x_{ij}$.

Using \eqref{rankinequality}, \eqref{ranksum} and \eqref{rankprod}, we have
\begin{align*}
&\max(\|F^{\widehat C_{n,\varepsilon}}-F^{C_{n,\varepsilon}}\|,\|F^{\widehat B_{n,\varepsilon}} 
-F^{B_{n,\varepsilon}}\|) \\ 
\le & \frac1n\rank(X_n-\widehat X_n) \le\frac1n\sum_{i=1}^n\sum_{j=1}^N 
I(\vert x_{ij}\vert \ge\eta_n\sqrt{n}).
\end{align*}
Note that
\begin{align*}
& \rVar\left(\sum_{i=1}^n\sum_{j=1}^NI(\vert x_{ij}\vert\ge\eta_n\sqrt{n}) \right)\le \bbE\left(\sum_{i=1}^n\sum_{j=1}^NI(\vert x_{ij}\vert \ge\eta_n\sqrt{n}) \right)\\
\le &\frac1{n\eta_n^2}\sum_{i=1}^p\sum_{j=1}^n\bbE\vert x_{ij}\vert^2 I(\vert x_{ij}\vert \ge\eta_n\sqrt{n}) =o(n).
\end{align*}
Therefore, by Bernstein inequality (p. 21 of \cite{bai2010spectral}), we have
\begin{align}\label{t2}
&\Pr \left(\max(\|F^{\widehat C_{n,\varepsilon}}-F^{C_{n,\varepsilon}}\|\|,F^{\widehat B_{n,\varepsilon}}-F^{B_{n,\varepsilon}}\|)\ge \varepsilon/6\right) \\ \nonumber
\le & 2\exp\left(\frac{-(\varepsilon n/12)^2}{o(n)+\varepsilon n/12}\right)\le 2e^{-\delta n},
\end{align}
where $\delta$ is some positive constant. The right hand side is summable, hence, 
\begin{align}\label{t3}
\max(\|F^{\widehat C_{n,\varepsilon}}-F^{C_{n,\varepsilon}}\|, \|F^{\widehat B_{n,\varepsilon}}-F^{B_{n,\varepsilon}}\|)\le \varepsilon/6, a.s. \mbox{ for all large } n.
\end{align}
Define
\begin{align*}
\tilde x_{ij}=\begin{cases}(\hat x_{ij}-\bbE \hat x_{ij})/\sigma_{ij}, & \mbox{ if } \sigma^2_{ij}=\bbE \vert \hat x_{ij}-\bbE \hat x_{ij}\vert^2\ge 1/2,\\ %\cr 
y_{ij},& \mbox{ otherwise,}%\cr 
\end{cases}
\end{align*}
where $y_{ij}$ are i.i.d. random variables taking values $\pm 1$ with equal probabilities. Define $\widetilde C_n$ and $\widetilde B_n$ 
with $\tilde x_{ij}$ replacing $\hat x_{ij}$. Then, from \eqref{Levymatrix} and \eqref{traceprod},
we have
\begin{align}\label{t4}
L^4(F^{\widehat C_{n,\ep}},F^{\widetilde C_{n,\ep}})\le \frac{2\tau}{n^2N}(\rtr\widehat C_{n,\ep}+\rtr\widetilde C_{n,\ep})\rtr (\widehat X_n-\widetilde X_n)(\widehat X_n-\widetilde X_n)^*,
\end{align}
and
\begin{align*}
L^4(F^{\widehat B_{n,\ep}},F^{\widetilde B_{n,\ep}})\le \frac{2\tau}{n^2N}(\rtr\widehat B_{n,\ep}+\rtr\widetilde B_{n,\ep})\rtr (\widehat X_n-\widetilde X_n)(\widehat X_n-\widetilde X_n)^*.
\end{align*}
Using \eqref{traceprod} and the fact that $(a+b)^2\leq 2(a^2+b^2)$, we have         
\begin{align*}
n^{-1}\rtr\widehat C_{n,\ep}\le 2\tau(nN)^{-1}\rtr(R_{n,\ep}R_{n,\ep}^*+\widehat X_{n}\widehat X_{n,}^*),
\end{align*}
\begin{align*}
n^{-1}\rtr\widetilde C_{n,\ep}\le 2\tau(nN)^{-1}\rtr(R_{n,\ep}R_{n,\ep}^*+\widetilde X_{n}\widetilde X_{n}^*),
\end{align*}
\begin{align*}
n^{-1}\rtr\widehat B_{n,\ep}\le 2(nN)^{-1}\rtr(R_{n,\ep}R_{n,\ep}^*+\tau\widehat X_{n}\widehat X_{n}^*),
\end{align*}
\begin{align*}
n^{-1}\rtr\widetilde B_{n,\ep}\le 2(nN)^{-1}\rtr(R_{n,\ep}R_{n,\ep}^*+\tau\widetilde X_{n}\widetilde X_{n}^*).
\end{align*}

We have $(nN)^{-1}\rtr R_{n,\ep}R_{n,\ep}^*\leq\tau$ and claim that both $(nN)^{-1}\rtr\widehat X_{n}\widehat X_{n}^*$ and
$(nN)^{-1}\rtr\widetilde X_{n}\widetilde X_{n}^*$ converge a.s. to 1. To support this claim, we note that
$$(nN)^{-1}\bbE\rtr\widehat X_{n}\widehat X_{n}^*=1- (nN)^{-1}\sum_{ij}\bbE\vert x_{ij}\vert^2I(\vert x_{ij}\vert \ge \eta_n\sqrt{n})\to1.$$
Since $\bbE\vert x_{ij}\vert^kI(\vert x_{ij}\vert<\delta_n\sqrt n)\leq n^{\frac{k-2}2}$ for $k\ge2$, the fourth central moment of each quantity is bounded by $Kn^{-8}(n^2n^3+n^4n^2),$ 
which is summable. The claim is proven.

We have 
\begin{align*}
&\frac1{nN}\rtr (\widehat X_n-\widetilde X_n)(\widehat X_n-\widetilde X_n)^*
=\frac1{nN}\sum_{i=1}^n\sum_{j=1}^N\vert\hat x_{ij}-\tilde x_{ij}\vert^2:=I_1+I_2,
\end{align*}
where
\begin{align*}
I_1=\frac1{nN}\sum_{\gs^2_{ij}\ge 1/2}\vert \hat x_{ij}-\tilde x_{ij}\vert^2,\\
I_2=\frac1{nN}\sum_{\gs^2_{ij}< 1/2}\vert \hat x_{ij}- y_{ij}\vert^2.
\end{align*}
We see that 
\begin{align*}
&\bbE I_1=\frac1{nN}\sum_{\gs^2_{ij}\ge 1/2}\bbE\left\vert \frac{\bbE x_{ij}I(\vert x_{ij}\vert \ge\eta_n\sqrt n)}{\gs_{ij}}-\hat x_{ij}\frac{\gs_{ij}-1}{\gs_{ij}}\right\vert^2\\
\le&\frac2{nN}\sum_{\gs^2_{ij}\ge 1/2}\left(\frac1{\eta_n^2n}+\left(\gs_{ij}-1\right)^2\right)\\ \le&\frac2{nN}\sum_{\gs^2_{ij}\ge 1/2}\bigg(\frac1{\eta_n^2n}+\bigg(\bbE\vert x_{ij}\vert^2I(\vert x_{ij}\vert \leq \eta_n\sqrt n)-1 - \vert\bbE x_{ij}I(\vert x_{ij}\vert \leq \eta_n\sqrt n)\vert^2 \bigg)^2\bigg)\\
=&\frac2{nN}\sum_{\gs^2_{ij}\ge 1/2}\left(\frac1{\eta_n^2n}+\left(\bbE\vert x_{ij}\vert^2I(\vert x_{ij}\vert>\eta_n\sqrt n)+\vert \bbE x_{ij}I(\vert x_{ij}\vert>\eta_n\sqrt n)\vert^2\right)^2\right)
\\& \le\frac2{nN}\sum_{\gs^2_{ij} \ge 1/2}\left(\frac1{\eta_n^2n}+4\bbE\vert x_{ij}\vert^2I(\vert x_{ij}\vert>\eta_n\sqrt n)\right)\to 0.
%\le\frac1{np}\sum_{i,j}\left(\frac{1}{n\eta_n^2}+2\rE|x_{ij}|^2I(|x_{ij}|>\eta_n\sqrt{n})\right)\to 0.
\end{align*}
And
\begin{align*}
\bbE\vert I_1-\bbE I_1\vert^4 
&\leq \frac K{n^8}\bigg(\sum_{ij}\left\vert \vert \hat x_{ij}-\tilde x_{ij}\vert^2-\bbE\vert\hat x_{ij}-\tilde x_{ij}\vert^2\right\vert^4\\
&+\sum_{\{i,j\}\neq\{i',j'\}}
\bbE\left\vert \vert\hat x_{ij}-\tilde x_{ij}\vert^2-\bbE\vert \hat x_{ij}-\tilde x_{ij}\vert^2\right\vert^2 \\& \quad \quad \quad \bbE\left\vert\vert\hat x_{i'j'}-\tilde x_{i'j'}\vert^2-\bbE\vert \hat x_{i'j'} -\tilde x_{i,j'}\vert^2\right\vert^2\bigg)\\&
\le \frac K{n^8}(nN\times n^3 + (nN)^2n^2),
\end{align*}
which is summable and hence
$I_1\to 0, a.s.$
When $\sigma^2_{ij}<1/2$, we have
\begin{align*}
1/2>&\bbE\vert x_{ij}\vert^2I(\vert x_{ij}\vert\leq\eta_n\sqrt n)-\vert\bbE x_{ij}I(\vert x_{ij}\vert \leq\eta_n\sqrt n)\vert^2\\&
=1-\bbE\vert x_{ij}\vert^2I(\vert x_{ij}\vert>\eta_n\sqrt n)-\vert \bbE x_{ij}I(\vert x_{ij}\vert>\eta_n\sqrt n)\vert^2,
\end{align*}
which implies
$$\bbE \vert x_{ij}^2\vert I(\vert x_{ij}\vert \ge\eta_n\sqrt{n})\ge \frac12-o(1)\ge \frac13,$$%\rE |x_{ij}^2|I(|x_{ij}|\ge\eta_n\sqrt{n})\ge \frac12-O(|\rE \hat x_{ij}|^2)\ge \frac13.$$
for all $n$ large.
Denote by $N$, the number of pairs $(i,j)$ such that $\gs_{ij}^2<1/2$. 
When $\gs_{ij}^2<1/2$, we have 
%$$
\bqn \sum_{\gs_{ij}^2<1/2}\bbE \vert x_{ij}^2\vert I(\vert x_{ij}\vert \ge\eta_n\sqrt{n})\ge N/3.
\eqn
Upon combining the above with the relation,
\bqn \sum_{\gs_{ij}^2<1/2}\bbE \vert x_{ij}^2\vert I(\vert x_{ij}\vert \ge\eta_n\sqrt{n})\le o(nN),
\eqn
it becomes evident that for any arbitrary $\epsilon>0$, it holds almost surely that 
$$\limsup_n\max(L(C_{n,\epsilon},\widetilde C_{n,\epsilon}),L(B_{n,\epsilon},\widetilde B_{n,\epsilon}))\leq\epsilon/3.$$

The results in \cite{zhou2022limiting} can now be applied to $C_n$ and $C_{n,\epsilon}$.   
Let $g_{n,\epsilon}(z)=(1/n)\rtr (C_{n,\epsilon}-zI)^{-1}T_{n,\epsilon}$.    
Assume $(x,y)$ is a random vector with joint distribution function $H(s,t)$,  denote by $H^{\epsilon}$ the joint distribution function of $(\min(x,\tau),\min(y,\tau))$.   
Then from \cite{zhou2022limiting}, we have for all positive $c$ and $z\in\mathbb{C}^+$, almost surely, for any converging subsequence of $(m_{C_{n,\epsilon}}(z)$,$g_{n,\epsilon}(z))$ to $(m,g)$, this pair must satisfy \eqref{first} with $H$ replaced by  $H^{\epsilon}$.  It amounts to verify that there is only one  $m$ satisfying the equations for an infinite number of $z\in\mathbb{C}^+$ with an accumulation point.

Now we turn our attention to $B_n$, the definition of which also incorporates $C_n$.   
The transition from \eqref{first} to \eqref{second} persists when $T_n$ remains invertible.
To this end, we define (without loss of generality we can assume $\epsilon<\tau$)
$$\widetilde T^{1/2}_{n,\ep}=\bbU_n{\rm diag}((\max(\epsilon,\min(t_1,\tau)))^{1/2},\ldots,(\max(\epsilon,\min(t_n,\tau)))^{1/2})\bbU_n^*,$$ 
and 
$$\widetilde{\widetilde B}_{n,\ep}=(1/N)(R_{n,\ep}+\widetilde T_{n,\ep}^{1/2}\widetilde X_n)(R_{n,\ep}+\widetilde T_{n,\ep}^{1/2}\widetilde X_n)^*.$$ 
Then from \eqref{Levymatrix} and \eqref{traceprod}, we have
\begin{align*}
L^4(F^{\widetilde{\widetilde B}_{n,\ep}},F^{\widetilde B_{n,\ep}})&\le \frac2{Nn^2}(\rtr \widetilde{\widetilde B}_{n,\ep}+\rtr\widetilde B_{n,\ep})\rtr (T_{n,\ep}^{1/2}-\widetilde T_{n,\ep}^{1/2})\widetilde X_n\widetilde X_n^*(T_{n,\ep}^{1/2}-\widetilde T_{n,\ep}^{1/2}) \\&
\le \frac{K\epsilon}{nN}\rtr \widetilde X_n\widetilde X_n^*\to K\epsilon, a.s.
\end{align*}

We see that, almost surely, the assumption \eqref{levylimsup} in Lemma \ref{tight} is met.
In order to demonstrate uniqueness, we shift our focus to the matrix $\underline{B}_n$, as defined in \eqref{Bnul}. 
As noted earlier, the eigenvalues of $B_n$ are the same as those of $\underline B_n$, except for $ \lvert n-N\rvert $ zero eigenvalues. 
The relation between their ESDs and Stieltjes transforms is given in \eqref{BBnul}.
After making the variable transformations in \eqref{vartrans} we arrive at the equations in \eqref{third}.

Fix $z=x+iv$, $v>0$. Notice that, when $ (m,g) $ are finite limits of $(m^{B_n}(z),g_n(z))$ then necessarily 
$\Im \underline{m},\ \Im\underline{g},\ \Im (z\underline{m}),\ \Im (z\underline{g})$ are all nonnegative.
Suppose $ (\underline{m}, \underline{g}) $ is such a set of solutions to \eqref{third}.  
Recall the identity \eqref{7222} in section 2. In the case where $H$ has bounded support, it becomes apparent that we can make $B_0$ approach 1 as closely as desired, hence the quantity $(1-c)v+cB_0v$ positive, for suitably small $\underline m$ and $\underline g$. Therefore for these values of  $\underline m$ and $\underline g$, we have \eqref{Vlessthanz}.
Redefining $B_{n,\epsilon}$ to be
$B_{n,\epsilon}=(1/N)(R_{n,\epsilon}+\widetilde T_{n,\epsilon}X_n)(R_{n,\epsilon}+\widetilde T_{n,\epsilon}X_n)^*$, we use the results on $C_{n,\epsilon}$ to
conclude that, almost surely, for any weakly convergent subsequence of $F^{B_{n,\epsilon}}$, the corresponding $m$ will satisfy \eqref{second} for some $g$, and therefore the corresponding $(\underline m,\underline g)$ will satisfy \eqref{third}.

Since $\widetilde T_{n,\epsilon}$ has bounded spectral norm for all $n$, we see that, with $g_{n,\epsilon}$ now equal to $(1/n)\rtr(B_{n,\epsilon}-zI)^{-1}\widetilde T_{n,\epsilon}$,
$$\frac1{\frac1n\rtr\widetilde T_{n,\epsilon}}g_{n,\epsilon}(z)$$
is the Stieltjes transform of a probability measure.  Consequently, from \eqref{vartrans},
$$\underline g_{n,\epsilon}(z)\equiv-\frac1{z(1+c_ng_{n,\epsilon}(z))}$$
satisfies the conditions of being the Stieltjes transform of a probability measure $P_{n,\epsilon}$ with mass on the nonnegative reals.  Any vaguely convergent subsequence of $P_{n,\epsilon}$ will have
the limiting Stieltjes transform also of this form, which is the Stieltjes transform of a probability measure concentrated on the nonnegative reals.  Therefore the convergence is weak.

Suppose, with probability one, there is a subsequence for which both $F^{B_{n,\epsilon}}$ and $P_{n,\epsilon}$ converge weakly, with resulting limiting Stieltjes transforms $\underline m(z)$ and $\underline g(z)$, which satisfy \eqref{third} for all $z=x+iv\in\mathbb{C}^+$.   Since Stieltjes transforms of probability measures are bounded by $1/v$, we can find $v$ suitably large so that \eqref{Vlessthanz} holds.  
Suppose for one of this $z$, there are two  sets of solutions $ (\underline{m}_{(i)}, \underline{g}_{(i)}) $, $ i = 1, 2 $, resulting, almost surely, from two weakly converging subsequences of $F^{B_{n,\epsilon}}$ and $P_{n,\epsilon}$ with $ \um_{(1)}\ne\um_{(2)}$.  
We have then \eqref{7228}, except there is a strict inequality at the last step, resulting in a contradiction.
Thus we have  unique Stieltjes transforms for any almost surely weakly converging subsequence of $F^{B_{n,\epsilon}}$ and $P_{n,\epsilon}$. So that, with probability one, $F^{B_{n,\epsilon}}$ and $P_{n,\epsilon}$ converge weakly to a nonrandom probability distribution function $F^{\epsilon}$ and a 
nonrandom probabilty measure $P^{\epsilon}$ with limiting Stieltjes transforms satisfying \eqref{second}.  
From Lemma \ref{tight} we have almost surely $F^{B_n}$ converging weakly to a nonrandom distribution function $F$, which implies that $F^{C_n}$ also converges almost surely weakly to a nonrandom distribution function.

It amounts to show that this limiting distribution satisfies the equations.  
With $x,y$ denoting random variables with joint distribution function $H(s,t)$, we define $H^{\epsilon}(s,t)$ as the joint distribution function of the random variables $\min(x,\tau_{\epsilon})$ and $\max(\epsilon,\min(y,\tau_{\epsilon}))$. And let $F^{\epsilon}$ be the distribution function associated with $H^{\epsilon}$. 
Then as $\epsilon\to0$, $F^{\epsilon}$ and $H^{\epsilon}$ converge in distribution to $F$ and $H$, respectively.   
Fix $z=x+iv\in\mathbb{C}^+$.  With $m^{\epsilon}=m^{\epsilon}(z)$, $\underline m^{\epsilon}=\underline m^{\epsilon}(z)$,
$g^{\epsilon}=g^{\epsilon}(z)$, $\underline g^{\epsilon}=\underline g^{\epsilon}(z)$ denoting the values in equations \eqref{second} and \eqref{third},
since $F^{\epsilon}$ converges in distribution to $F$, we immediately get $m^{\epsilon}$ and $\underline m^{\epsilon}$ converging to $m$, the Stieltjes transform of $F$ at $z$, and to $\underline m\equiv-\frac{1-c}z +cm$, respectively. 

We claim that $g^{\epsilon}$ remains bounded as $\epsilon\to0$.   We have $\underline m_2$ the imaginary part of $\underline m$ is positive, since
$\underline m$ is the value of a Stieljes transform for $z\in\mathbb{C}^+$.
Let $\delta>0$ be a lower bound on $\underline m_2^{\epsilon}$.  Then the integrand in the second equation in \eqref{third} satisfies
$$\left\vert\frac{t}{1+s\underline g+t\underline m}\right\vert=\left\vert \frac{t}{1+s\underline g+t\underline m_1+it\underline m_2}\right\vert
\leq\frac1{\delta}.$$
Suppose on a sequence  $\epsilon_n$, $g^{\epsilon_n}$ goes unbounded.  Then necessarily $\underline g^{\epsilon_n}\to0$.  But from the second
equation of \eqref{third}, we see the right side goes unbounded, while the left side remains at $z$, a contradiction.   

On a sequence $\epsilon_n$, let $g=\lim_{n\to\infty}g^{\epsilon_n}$. Notice the integrand in the first equation in \eqref{second} is bounded in 
absolute value by $1/v$.   We have
\begin{align*}
&\int\frac{\mathrm{d}H^{\epsilon_n}(s,t)}{\frac{s}{1+cg^{\epsilon_n}}-(1+cm^{\epsilon_n})z+t(1-c)}-\int\frac{\mathrm{d}H(s,t)}{\frac{s}{1+cg}-(1+cm)z+t(1-c)}
\\=&\int\frac{\left(\frac{s}{1+cg}-\frac{s}{1+cg^{\epsilon_n}}+m^{\epsilon_n}-m\right)\mathrm{d}H^{\epsilon_n}(s,t)}{\left(\frac{s}{1+cg^{\epsilon_n}}-(1+cm^{\epsilon_n})z+t(1-c)\right)\left(\frac{s}{1+cg}-(1+cm)z+t(1-c)\right)}\label{firstinsecond}\\&
+\int\frac{\mathrm{d}H^{\epsilon_n}(s,t)}{\frac{s}{1+cg}-(1+cm)z+t(1-c)}-\int\frac{\mathrm{d}H(s,t)}{\frac{s}{1+cg}-(1+cm)z+t(1-c)} \\& \to 0
\quad\text{ as $n\to\infty$}.
\end{align*}

On the same sequence $\epsilon_n$, let $\ug=\lim_{n\to\infty}\ug^{\epsilon_n}$.   
The relationship between $g$ and $\ug$ in \eqref{vartrans} still stands. Considering that the absolute value of the denominator on the right side is at least $v$, it is clear that $\ug_2 >0$. Now, let $\delta>0$ be a lower bound for both $\um_2^{\epsilon}$ and $\ug^{\epsilon_n}_2$.
For the second equation in \eqref{third}, we have 
\begin{align*}
&\limsup_n\left\vert \int\frac{t\mathrm{d}H^{\epsilon_n}(s,t)}{1+s\underline g^{\epsilon_n}+t\underline m^{\epsilon_n}}-\int\frac{t\mathrm{d}H(s,t)}{1+s\underline g+t\underline m}\right\vert
\\ \le&\limsup_n\left\vert\int\frac{(s(\underline g-\underline g^{\epsilon_n})+t(\underline m-\underline m^{\epsilon_n}))t\mathrm{d}H^{\epsilon_n}(s,t)}{\left(1+s\underline g^{\epsilon_n}+t\underline m^{\epsilon}\right)\left(1+s\underline g+t\underline m\right)}\right\vert \\&
+\limsup_n\left\vert\int\frac{t\mathrm{d}H^{\epsilon}(s,t)}{1+s\underline g+t\underline m}-\int\frac{t\mathrm{d}H(s,t)}{1+s\underline g+t\underline m}\right\vert\\ \leq&\frac1{\delta^2}\limsup_n(\vert\underline g-\underline g^{\epsilon_n}\vert +\vert\underline m-\underline m^{\epsilon_n}\vert)=0.
\end{align*}

We conclude that for every $z\in\mathbb{C}^+$, the limiting $F$ has its Stieltjes transform $m=m(z)$ satisfying \eqref{second} for some $g$ with
$\Im g\ge0$. This is along with the corresponding $\underline m$, $\underline g $ satisfying \eqref{third}.

\section*{Acknowledgments}
The authors would like to thank the anonymous referee, the Associate Editor and the Editor for their invaluable and constructive comments.
J. Hu was supported by NSFC (No. 12171078, 11971097) and National Key R \& D Program of China (No. 2020YFA0714102).
Z. D. Bai was partially supported by NSFC Grant 12171198 and Team Project of Jilin Provincial Department of Science and Technology (No.20210101147JC).

%\bibliography{sn-bibliography}% common bib file
%% if required, the content of .bbl file can be included here once bbl is generated
%%\input sn-article.bbl

%% Default %%
%%\input sn-sample-bib.tex%

\end{document}